\documentclass[10pt]{article}

\usepackage[margin=1.5in]{geometry}

\usepackage{amsmath}
\usepackage{amsthm}
\usepackage{amsfonts}
\usepackage{makecell}
\usepackage{booktabs}

\usepackage{marvosym}
\usepackage{MnSymbol}

\usepackage{listings}
\lstset{
  basicstyle=\ttfamily,
  mathescape
}

\newtheorem{theorem}{Theorem}[section]
\newtheorem{lemma}[theorem]{Lemma}
\newtheorem{corollary}[theorem]{Corollary}

\theoremstyle{definition}
\newtheorem{definition}[theorem]{Definition}

\usepackage{scalerel}
\usepackage{stackengine,wasysym}

\usepackage{array}
\newcolumntype{L}[1]{>{\raggedright\let\newline\\\arraybackslash\hspace{0pt}}m{#1}}
\newcolumntype{C}[1]{>{\centering\let\newline\\\arraybackslash\hspace{0pt}}m{#1}}
\newcolumntype{R}[1]{>{\raggedleft\let\newline\\\arraybackslash\hspace{0pt}}m{#1}}

\newcommand\reallywidetilde[1]{\ThisStyle{%
  \setbox0=\hbox{$\SavedStyle#1$}
  \setbox1=\hbox{$x_1$}
  \stackengine{-.1\LMpt}{$\SavedStyle#1$}{%
    \stretchto{\scaleto{\SavedStyle\mkern.2mu\AC}{.5150\wd0}}{.8\ht1}%
  }{O}{c}{F}{T}{S}%
}}

\newcommand{\rev}[1]{{#1}^R}
\newcommand{\drev}[1]{\overset{R}{\reallywidetilde{#1}}}
\newcommand{\revant}[1]{\reallywidetilde{#1}}

\newcommand{\ediv}{\mathbin{|_e}}
\renewcommand{\div}{\mathbin{|}}
\newcommand{\Simp}{\mathrm{Simp}}
\newcommand{\irr}{\mathrm{irr}}
\newcommand{\rest}[2]{{#1}_{({#2})}}
\newcommand{\restle}[2]{{#1}_{(\leq {#2})}}
\newcommand{\ind}{\mbox{ind}}

\usepackage{enumerate}
\usepackage{enumitem}
\setenumerate{label={\normalfont (\alph*)}}

\newenvironment{case}{%
 \let\olditem\item%
 \renewcommand\item[1][]{\olditem {##1} \\}%
 \begin{enumerate}[label=\textbf{Case \arabic*:},itemindent=*,leftmargin=0em]}{\end{enumerate}%
 }

\title{Avoidance bases for formulas with reversal}

\author{James Currie, Lucas Mol, Narad Rampersad}

\date{April 11, 2018}

\begin{document}

\maketitle

\begin{abstract}
In the interest of studying formulas with reversal of high avoidability index, we find $n$-avoidance bases for formulas with reversal for $n\in\{1,2,3\}$.  We demonstrate that there is a unique formula with reversal in each of these three bases of highest avoidability index $n+2$; these formulas are $xx$, $xyx\cdot \rev{y}$, and $xyzx\cdot \rev{y}\cdot \rev{z}$, which belong to an infinite family of formulas with reversal that has been the subject of recent study by the authors.

\noindent
{\bf MSC 2010:} 68R15

\noindent
{\bf Keywords:} Pattern avoidance; Avoidance basis; Formulas with reversal; Avoidability index
\end{abstract}

\section{Preliminaries}


Let $\Sigma$ be a set of letters called \textit{variables}.  A \textit{pattern} $p$ over $\Sigma$ is a finite word over $\Sigma$.  A \textit{formula} $\phi$ over $\Sigma$ is a finite set of patterns over $\Sigma.$  We usually use dot notation to denote formulas; that is, for $p_1,\dots,p_n\in \Sigma^*$ we let
\[
p_1\cdot p_2\cdot \ldots\cdot p_n=\{p_1,p_2,\dots,p_n\}.
\]
We sometimes refer to formulas as \emph{classical formulas} to differentiate them from formulas with reversal.

For an alphabet $\Sigma,$ define the \textit{reversed alphabet} $\Sigma^R=\{\rev{x}\colon\ x\in \Sigma\},$ where $\rev{x}$ denotes the \textit{reversal} or \textit{mirror image} of variable $x.$  A \textit{pattern with reversal} over $\Sigma$ is a finite word over alphabet $\Sigma\cup\Sigma^R.$  A \textit{formula with reversal} over $\Sigma$ is a finite set of words over $\Sigma\cup\Sigma^R,$ i.e.\ a finite set of patterns with reversal over $\Sigma.$  The elements of a formula (with reversal) $\phi$ are called the \textit{fragments} of $\phi.$

For words over any alphabet $A,$ we denote by $\revant{-}$ the reversal antimorphism; if $a_1,a_2,\dots, a_n\in A$, then
\[
\reallywidetilde{a_1a_2\dots a_n}=a_na_{n-1}\dots a_1.
\]
We say that a morphism $f:(\Sigma\cup \Sigma^R)^*\rightarrow A^*$ \textit{respects reversal} if $f(\rev{x})=\revant{f(x)}$ for all variables $x\in \Sigma.$  Note that any morphism $f:\Sigma^*\rightarrow A^*$ extends uniquely to a morphism from $(\Sigma\cup \Sigma^R)^*$ that respects reversal.

Let $p$ be a pattern (with reversal).  An \textit{instance} of $p$ is the image of $p$ under some non-erasing morphism (respecting reversal).  A word $w$ \textit{avoids} $p$ if no factor of $w$ is an instance of $p.$  Let $\phi$ be a formula (with reversal).  We say that $\phi$ \textit{occurs} in $w$ if there is a non-erasing morphism $h$ (which respects reversal) such that the $h$-image of every fragment of $\phi$ is a factor of $w.$  In this case we say that $\phi$ occurs in $w$ \textit{through $h$}, or that $w$ \textit{encounters} $\phi$ through $h.$  If $\phi$ does not occur in $w$ then we say that $w$ \textit{avoids} $\phi.$  For any $k\geq 1$, let $A_k$ denote an alphabet of size $k$.  We say that $\phi$ is \textit{$k$-avoidable} if there are infinitely many words over $A_k$ which avoid $\phi$. Equivalently, $\phi$ is $k$-avoidable if there is an $\omega$-word $\mathbf{w}$ over $A_k$ such that every finite prefix of $\mathbf{w}$ avoids $\phi$ (in this case we say that $\mathbf{w}$ avoids $\phi$).  If $\phi$ is $k$-avoidable for some $k$ then we say that $\phi$ is \emph{avoidable}; the \emph{avoidability index} of $\phi$, denoted $\ind(\phi),$ is the smallest integer $k$ such that $\phi$ is $k$-avoidable.  If $\phi$ is not $k$-avoidable for any natural number $k$, then we say that $\phi$ is \emph{unavoidable}, and we define $\ind(\phi)=\infty$.

Formulas were introduced by Cassaigne \cite{CassaigneThesis}, and it was shown that every formula corresponds in a natural way to a pattern of the same \textit{avoidability index} (see \cite{CassaigneThesis} or \cite{ClarkThesis} for details).  Essentially, this means that formulas are a natural generalization of patterns in the context of avoidability.

In order to define divisibility of formulas with reversal, we require a second notion of reversal in $(\Sigma\cup\rev{\Sigma})^*$ which not only reverses the letters of a word in $(\Sigma\cup \rev{\Sigma})^*$, but also swaps $x$ with $\rev{x}$ for all $x\in\Sigma.$  For $x_1,x_2,\dots,x_n\in\Sigma\cup \rev{\Sigma},$ we define \textit{d-reversal} $\drev{-}$ by
\[
\drev{x_1x_2\dots x_n}=\revant{\rev{x}_1\rev{x}_{2}\dots\rev{x}_n}=\rev{x}_n\rev{x}_{n-1}\dots\rev{x}_1,
\]
where $\rev{(\rev{x})}=x$ for all $x\in\Sigma$ (note that the d stands for division).  A morphism $h:(\Sigma\cup\rev{\Sigma})^*\rightarrow(\Sigma\cup \rev{\Sigma})^*$ \textit{respects d-reversal} if 
\[
h(\rev{x})=\drev{h(x)}
\] 
for all $x\in\Sigma.$  Note that any morphism $f:\Sigma^*\rightarrow (\Sigma\cup\rev{\Sigma})^*$ extends uniquely to a morphism from $(\Sigma\cup \Sigma^R)^*$ that respects d-reversal.

We say that a pattern (with reversal) $p$ is a \textit{factor} of formula (with reversal) $\phi$ if $p$ is a factor of some fragment of $\phi$.  Let $\phi$ and $\psi$ be formulas with reversal over $\Sigma$.  We say that $\phi$ \textit{divides} $\psi$, written $\phi \div \psi$, if there is a non-erasing morphism $h:(\Sigma\cup \rev{\Sigma})^*\rightarrow (\Sigma\cup \rev{\Sigma})^*$ which respects d-reversal such that the $h$-image of every fragment of $\phi$ is a factor of $\psi.$  We say that $\phi$ \textit{e-divides} $\psi$, written $\phi\ediv\psi$, if there is some injective morphism respecting $d$-reversal $h$ having $|h(x)|=1$ for all $x\in\Sigma\cup \rev{\Sigma}$ such that $\phi\div\psi$ through $h$.   We say that $\phi$ and $\psi$ are \textit{equivalent} (resp.\ \textit{e-equivalent}) if they divide (resp.\ e-divide) one another.

For example, the formula with reversal $xyx\cdot \rev{y}$ divides $xyzxyz\cdot \rev{z}\rev{y}\rev{z}$ through the morphism respecting d-reversal $h$ defined by $h(x)=x$ and $h(y)=yz$.  The formula with reversal $xyx\cdot \rev{y}$ e-divides $y\rev{x}y\cdot x$ through the morphism respecting d-reversal $g$ defined by $g(x)=y$ and $g(y)=\rev{x}$.  In fact, since $y\rev{x}y\cdot x$ e-divides $xyx\cdot\rev{y}$ as well, $xyx\cdot \rev{y}$ and $y\rev{x}y\cdot x$ are e-equivalent.

It is straightforward to show that if $\phi$ divides $\psi$ through morphism respecting d-reversal $h$ and $\psi$ occurs in a word $w$ through morphism respecting reversal $f$, then $f\circ h$ respects reversal and $\phi$ occurs in $w$ through $f\circ h.$  Thus if $\psi$ is unavoidable and $\phi$ divides $\psi$, then $\phi$ is unavoidable as well.  On the other hand, if $\phi$ is avoidable and $\phi$ divides $\psi$, then $\psi$ is avoidable as well, and $\ind(\phi)\geq \ind(\psi)$.

For any natural number $n$, we let $\rest{\phi}{n}$ denote the formula (with reversal) whose fragments are the factors of $\phi$ of length $n$.  We let $\restle{\phi}{n}$ denote the formula (with reversal) whose fragments are the factors of $\phi$ of length at most $n.$    For example, if $\phi=xyzx\cdot xz\cdot \rev{y}$, then $\rest{\phi}{2}=xy\cdot yz\cdot zx\cdot xz$, and $\restle{\phi}{2}=xy\cdot yz\cdot zx\cdot xz\cdot x\cdot y\cdot z\cdot \rev{y}$.

A fragment $p$ of a formula with reversal $\phi$ is called \textit{redundant} if it is a factor of another fragment $q$ of $\phi$ (where $q\neq p$).  A formula with reversal $\phi$ is called \textit{irredundant} if it has no redundant fragments.  Every formula with reversal $\phi$ is e-equivalent to the irredundant formula $\irr(\phi)$ obtained by discarding the redundant fragments.

Let $p$ be a pattern with reversal over $\Sigma.$  The \textit{flattening} of $p$, denoted $p^\flat,$ is the image of $p$ under the morphism defined by $x\mapsto x$ and $\rev{x}\mapsto x$ for all $x\in\Sigma$.  We say that $p$ \textit{flattens} to $p^\flat.$  The \textit{flattening} of a formula with reversal $\phi,$ denoted $\phi^\flat,$ is the set of flattenings of all fragments of $\phi,$ i.e.\ $\phi^\flat=\{p^\flat\colon\ p\in\phi\}.$  Again, we say that $\phi$ \textit{flattens} to $\phi^\flat.$  It was shown in \cite{CMR2017} that if $\phi^\flat$ is avoidable, then $\phi$ is avoidable.  It follows that if $\phi$ is a pattern with reversal of length at least $2^n$ over an alphabet $\Sigma$ of size $n$, then $\phi$ is avoidable.

For a variable $x\in\Sigma,$ we let $x^\sharp$ denote the set $\{x,\rev{x}\}$ containing $x$ and its mirror image.  For sets $X$ and $Y$, we let
\[
XY=\{xy\colon\ x\in X\mbox{ and } y\in Y\},
\]
so that 
\[
x^\sharp y^\sharp=\{xy,x\rev{y},\rev{x}y,\rev{x}\rev{y}\}=xy\cdot x\rev{y}\cdot \rev{x}y\cdot \rev{x}\rev{y},
\] 
for example.  We often write the set containing a single word $w$ as simply $w$ instead of $\{w\}$ when using this notation.  For example,
\[
x^\sharp yx^\sharp=xyx\cdot xy\rev{x}\cdot \rev{x}yx\cdot \rev{x}y\rev{x}.
\]

For a formula with reversal $\phi$ over $\Sigma$, a variable $x\in\Sigma$ is called \emph{two-way} in $\phi$ if both $x$ and $\rev{x}$ are factors of $\phi$; otherwise, $x$ is called \emph{one-way} in $\phi$.

Finally, to describe a morphism $f\colon\{0,1,\dots,m\}^*\rightarrow\{0,1,\dots,n\}^*,$ we use the shorthand $f=f(0)/f(1)/\dots/f(m).$  For example, $g=01/2/031/3$ denotes the morphism $g\colon\{0,1,2,3\}^*\rightarrow \{0,1,2,3\}^*$ defined by $g(0)=01,$ $g(1)=2$, $g(2)=031,$ and $g(3)=3$.

\section{Introduction}

Formulas with reversal are a relatively new object of study in combinatorics on words, but they have received considerable attention due to some interesting and surprising results.  The number of binary words avoiding the pattern with reversal $xx\rev{x}$ was shown to be intermediate between polynomial and exponential~\cite{CurrieRampersad2016}, and this is the first time that such an intermediate growth rate has been observed in the context of pattern avoidance.  A similar growth rate was observed for binary words avoiding $x\rev{x}x$ soon afterwards~\cite{CurrieRampersad2015}.

Recently, the authors have found an infinite family of formulas with reversal of high avoidability index~\cite{CMR2016}, and have studied avoidability of formulas with reversal in general~\cite{CMR2017}.  For $n\geq 1$, define
\begin{align*}
\psi_n&=xy_1y_2\dots y_{n}x\cdot \rev{y_1}\cdot \rev{y_2}\cdot \ldots\cdot \rev{y_n}.
\end{align*}
In \cite{CMR2016}, it is shown that $\ind(\psi_1)=4,$ $\ind(\psi_n)=5$ for $n\in\{2,3,6\},$ $\ind(\psi_n)\geq 5$ for $n\in\{4,5\}$, and $\ind(\psi_n)\geq 4$ for all $n\geq 7.$  The constant general upper bound $\ind(\psi_n)\leq 5+(n\bmod 3)$ is also established.   We suspect that $\ind(\psi_n)=5$ for all $n\geq 2$.  Here, we extend this family in a natural way by defining $\psi_0=xx$.

In \cite{CMR2017}, the authors characterize the unavoidable formulas with reversal having at most two one-way variables.  It follows from this result and the well-known characterization of classical unavoidable formulas \cite{Zimin1984}, that if $\phi$ is a formula with reversal on at most two letters, then $\phi$ is unavoidable if and only if $\phi$ divides some formula from
\[
\mathcal{Z}_2=\{x^\sharp y^\sharp, x^\sharp y x^\sharp\},
\]
and that if $\phi$ is a formula with reversal on at most three letters, then $\phi$ is unavoidable if and only if $\phi$ divides some formula from
\[
\mathcal{Z}_3=\{x^\sharp y^\sharp z^\sharp, x^\sharp y^\sharp zx^\sharp y^\sharp, x^\sharp yx^\sharp z x^\sharp y x^\sharp\}.
\]

We note that the avoidability index of every pattern with reversal on at most two variables has been determined~\cite{CurrieLafrance2016}.  Here, we are interested in determining the avoidable formulas with reversal on at most three variables of highest avoidability index.  To this end, we define \emph{avoidance bases} for formulas with reversal, extending the idea of Clark \cite{ClarkThesis} for regular formulas.  An $n$-avoidance basis is a collection of ``minimally avoidable'' formulas on $n$ variables - those that are not properly divisible by any other avoidable formulas.  The definition and theory of avoidance bases for formulas with reversal is given in Section \ref{Bases}.  In Section \ref{FindBases}, we find $n$-avoidance bases for $n\in\{1,2,3\}$.  It follows from known results that $xx$ (that is, $\psi_0$) is the unique element of highest avoidability index $3$ in our $1$-avoidance basis, and that $xyx\cdot \rev{y}$ (which is equivalent to $\psi_1$) is the unique element of highest avoidability index $4$ in our $2$-avoidance basis.  The remainder of the article is committed to showing that $xyzx\cdot \rev{y}\cdot \rev{z}$ (equivalent to $\psi_2$) is the unique element of highest avoidability index $5$ in our $3$-avoidance basis.  This leads us to wonder whether $\psi_n$ has highest avoidability index among all formulas with reversal on $n+1$ variables for all $n$.  While it is tempting to conjecture that it is true, we suspect that it is not, given the constant general upper bound on $\ind(\psi_n)$ given in \cite{CMR2016}.

\section{Avoidance Bases}\label{Bases}

Since $\phi\div \psi$ implies $\ind(\phi)\geq \ind(\psi)$, the formulas with reversal with the highest avoidability indices should be those not divisible by any other (non-equivalent) avoidable formulas.  In a similar situation for regular formulas, Clark \cite{ClarkThesis} defined an \emph{avoidance basis}, which is a collection that describes all such ``minimally avoidable'' formulas.  We extend this idea to formulas with reversal.

\begin{definition}\label{BasisDef}
Fix an alphabet $\Sigma_n$ of order $n$.  A set $\Phi$ of formulas (with reversal) over $\Sigma_n$ is called an \textit{$n$-avoidance basis (for formulas with reversal)} if both of the following conditions are satisfied.
\begin{itemize}
\item For any avoidable formula (with reversal) $\psi$ over $\Sigma_n$, there is some $\phi\in \Phi$ such that $\phi\ediv \psi$, and
\item For any $\phi_1,\phi_2\in \Phi$, if $\phi_1\ediv \phi_2$, then $\phi_1=\phi_2.$
\end{itemize}
A formula (with reversal) $\phi$ is called \textit{$n$-minimal} if it belongs to an $n$-avoidance basis (for formulas with reversal).  We say that $f$ is {minimal} if it is $n$-minimal for some $n.$
\end{definition}

Much of the theory concerning $n$-avoidance bases for classical formulas translates directly to the situation for formulas with reversal.  In particular, $n$-avoidance bases for formulas with reversal exist for each $n$, and there is a nice characterization of minimal formulas with reversal.  The most important results are stated below.  We omit the proofs as they are analogous to those found in \cite{ClarkThesis} for classical formulas.  The only minor difference results from the fact that in \cite{ClarkThesis}, simplifications are not defined for fragments of length $1.$  Defining simplifications as follows makes the theory work nicely for formulas with reversal.

\begin{definition}
Let $\phi$ be a formula with reversal with fragment $q\neq \varepsilon$.  The \emph{$q$-simplification of $\phi$}, denoted $\Simp(\phi,q)$, is given by 
\[
\Simp(\phi,q)=
\begin{cases}
\phi-\{q\} \mbox{ if $|q|=1$, and}\\
(\phi-\{q\})\cdot p\cdot s \mbox{ if $|q|>1$,}
\end{cases}
\]  
where $p$ is the length $|q|-1$ prefix of $q$ and $s$ is the length $|q|-1$ suffix of $q$.  If $\psi=\Simp(\phi,q)$ for some $q\in \phi$, then $\psi$ is called a \textit{simplification} of $\phi$.
\end{definition}

\begin{theorem}[c.f. Clark \cite{ClarkThesis}]\label{BasisTheory}\ 
\begin{enumerate}
\item For every natural number $n$, there exists an $n$-avoidance basis for formulas with reversal.

%

\item Let $\Phi$ be an $n$-avoidance basis for formulas with reversal and let $\Psi$ be a set of avoidable formulas with reversal over an alphabet $\Sigma$ of order $n$.  Then $\Psi$ is an $n$-avoidance basis for formulas with reversal if and only if there exists a bijection $f\colon \Phi\rightarrow \Psi$ such that $\phi$ and $f(\phi)$ are e-equivalent for all $\phi\in\Phi.$

\item If $\phi$ is $n$-minimal, then $\phi$ is $(n+1)$-minimal.

\item \label{MinimalChar} Let $\phi$ be an irredundant formula with reversal.  Then $\phi$ is minimal if and only if both $\phi$ is avoidable and every simplification of $\phi$ is unavoidable.\hfill \qed
\end{enumerate}
\end{theorem}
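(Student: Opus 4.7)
My plan is to address each of the four parts in turn, relying on the fact that e-division is a length-preserving operation on fragments (since $|h(x)|=1$). For each formula $\phi$, let $L(\phi)$ denote the tuple of fragment lengths sorted in weakly decreasing order, compared lexicographically (with padding by $0$ for differing numbers of fragments). This gives a well-founded measure on formulas that is preserved under e-equivalence and strictly decreases under any simplification (for $|q|\geq 2$, one copy of $|q|$ is replaced by two copies of $|q|-1$; for $|q|=1$, a trailing $1$ is dropped). This single measure will drive both the existence argument in (1) and the harder direction of (4).

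For part (1), e-division preserves fragment lengths, so on the set of irredundant avoidable formulas over $\Sigma_n$ quotiented by e-equivalence, the relation ``$\phi \ediv \psi$ and not e-equivalent'' is well-founded by $L$. Taking one representative from each minimal class yields a set $\Phi$; any avoidable formula is e-divided by some minimal representative (descending chain), and distinct elements of $\Phi$ are mutually non-e-dividing by construction. For part (2), the proof is a standard cross-division argument: given bases $\Phi,\Psi$ and $\phi\in\Phi$, pick $\psi\in\Psi$ with $\psi\ediv\phi$ and then $\phi'\in\Phi$ with $\phi'\ediv\psi$; transitivity plus the basis condition forces $\phi'=\phi$, so $\phi$ and $\psi$ are e-equivalent, giving the bijection. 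For part (3), given $\phi$ in an $n$-basis $\Phi$ and any $(n+1)$-basis $\Phi'$, find $\phi'\in\Phi'$ with $\phi'\ediv\phi$; the image of $\phi'$ under the witnessing morphism lives over $\Sigma_n$ and is e-equivalent to $\phi'$, so a chase through $\Phi$ using the $n$-basis condition forces $\phi$ and $\phi'$ to be e-equivalent, after which part (2) lets us swap $\phi'$ for $\phi$ inside $\Phi'$.

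The substance is in part (4). For the forward direction, suppose $\phi$ is minimal and $\phi'=\Simp(\phi,q)$ is avoidable; since every new fragment of $\phi'$ is either a preexisting fragment of $\phi$ or a prefix/suffix of $q$, the identity morphism witnesses $\phi'\ediv\phi$. Then any minimal formula e-dividing $\phi'$ also e-divides $\phi$, but has $L$-measure at most $L(\phi')<L(\phi)$, hence cannot be e-equivalent to $\phi$, contradicting the basis condition on the basis containing $\phi$. For the backward direction, take an $n$-basis $\Phi$ and $\phi^*\in\Phi$ with $\phi^*\ediv\phi$ via some morphism $h$; by part (2) it suffices to show $\phi^*$ and $\phi$ are e-equivalent, so assume not. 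Then $L(\phi^*)<L(\phi)$ (strictly), so comparing multisets of fragment lengths level-by-level from the top, there must exist a fragment $r$ of $\phi$ of some length $\ell$ such that no $h$-image of a $\phi^*$-fragment of length $\ell$ equals $r$. Every $h$-image factor of $r$ then has length strictly less than $|r|$, so it omits the first or the last letter of $r$ and is therefore a factor of the prefix or suffix of length $|r|-1$; hence $\phi^*\ediv\Simp(\phi,r)$. This makes $\Simp(\phi,r)$ avoidable, contradicting the hypothesis.

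The main obstacle is the combinatorial step inside the backward direction of (4) — producing the fragment $r$ of $\phi$ whose entire length is not attained by any $h$-image of a $\phi^*$-fragment. The plan is to compare the sorted length tuples of $\phi$ and $\phi^*$ at the first position where they differ: at the largest length $\ell$ where $\phi$ has strictly more fragments than $\phi^*$, the injectivity of $h$ on variables forces all $h$-images of length-$\ell$ $\phi^*$-fragments to land as length-$\ell$ subwords of $\phi$-fragments, and pigeonhole then provides a length-$\ell$ fragment $r$ of $\phi$ that is not hit in full. The irredundancy hypothesis on $\phi$ is what guarantees such an $r$ is a genuine fragment (not absorbed as a factor of a longer one), which is precisely why the author notes that the length-$1$ case of $\Simp$ must be defined specially — without it, the level $\ell=1$ of this argument would stall.
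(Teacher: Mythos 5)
The paper does not actually prove this theorem --- the authors explicitly omit the proofs as ``analogous to those found in \cite{ClarkThesis} for classical formulas'' --- so there is no in-paper argument to compare against line by line. Your reconstruction follows the standard architecture of Clark's proofs: well-founded descent for existence, the cross-division argument for (2) and (3), the observation that $\Simp(\phi,q)\ediv\phi$ through the identity for the forward direction of (4), and, for the backward direction, the production of a fragment $r$ of $\phi$ that no image of a fragment of the dividing formula covers in full, so that the divisor already e-divides $\Simp(\phi,r)$. All of these are the right ideas, and the closing remark about why $\Simp$ must be defined specially for fragments of length $1$ is exactly on point.

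The one load-bearing assertion that you have not actually justified is the opening claim that ``e-division is a length-preserving operation on fragments,'' from which you derive that $L$ is preserved under e-equivalence and that $\phi^*\ediv\phi$ without e-equivalence forces $L(\phi^*)<L(\phi)$. An e-dividing morphism sends each fragment to a \emph{factor} of some fragment of the target, not to a fragment, so a fragment of $\phi^*$ may be absorbed strictly inside a longer fragment of $\phi$ and the multiset of fragment lengths is not transported in any obvious way. The claim is nevertheless true for irredundant formulas, but the proof needs irredundancy on \emph{both} sides: if a fragment $q$ of $\phi^*$ hits a fragment $r$ of $\phi$ exactly (i.e.\ $h(q)=r$), then any other fragment $p$ of $\phi^*$ whose image lands inside $r$ satisfies $h(p)$ is a factor of $h(q)$, whence $p$ is a factor of $q$ because $h$ is injective and letter-to-letter --- contradicting irredundancy of $\phi^*$. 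Descending through the lengths, this yields $L(\phi^*)\le L(\phi)$ with equality exactly when every fragment of $\phi$ is hit in full, which is equivalent to e-equivalence (invert $h$ on the letters of $\phi$). You should supply this argument; without it, parts (1) and (4) as you have written them rest on an unproved and, as stated, literally false premise. Note also that once this equivalence is in hand, the backward direction of (4) does not need the measure or the pigeonhole at all: non-e-equivalence directly gives a fragment $r$ of $\phi$ with $h(p)\ne r$ for every fragment $p$ of $\phi^*$, and irredundancy of $\phi$ guarantees that $r$ is not a factor of $\Simp(\phi,r)$, so your concluding step goes through unchanged. For (1), a still simpler route is available: every formula e-dividing $\psi$ has all fragments of length at most the longest fragment of $\psi$, so the set of candidates below $\psi$ is finite and minimal elements exist without any ordinal bookkeeping.
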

It is also straightforward to show that any $n$-avoidance basis for classical formulas is a subset of some $n$-avoidance basis for formulas with reversal.

In \cite{ClarkThesis}, Clark found an $n$-avoidance basis for classical formulas for each $n\in\{1,2,3\}$, and demonstrated that every $3$-minimal formula has avoidability index at most $4$.  This means that no classical avoidable formula on at most three variables has avoidability index greater than $4$.  Recently, the exact avoidability index of every $3$-minimal formula has been determined~\cite{CircularFormulas}.

\section{Finding avoidance bases for formulas with reversal}\label{FindBases}

In this section, we find $n$-avoidance bases for formulas with reversal for $n\in\{1,2,3\}$.  A $1$-avoidance basis $\Phi_1$ is given by $\{xx,x\rev{x}\},$ and this is easily verified by inspection.  A $2$-avoidance basis $\Phi_2$ and a $3$-avoidance basis $\Phi_3$ for formulas with reversal are shown in Table~\ref{Avoidance2} and Table~\ref{Avoidance3}, respectively.  What is known about the avoidability index of each formula with reversal in these bases is also included in the table, and an infinite word avoiding each formula with reversal on as few letters as is known to be possible is given.  Many of these infinite words are periodic, and we omit the proofs that they avoid the corresponding formulas with reversal as they are straightforward.  For the nonperiodic infinite words given in each table, we provide a reference, or point the reader to the relevant section of this article where the avoidance is proven.

It is straightforward to verify computationally that each of the given formulas in $\Phi_2$ and $\Phi_3$ is minimal using Theorem~\ref{BasisTheory}\ref{MinimalChar} and the known characterization of avoidable formulas with reversal on at most three variables.  However, this only tells us that the given formulas with reversal belong to some avoidance basis, not that they make up an avoidance basis together.  To verify this stronger fact, we return to the definition of avoidance basis for formulas with reversal.  While the second condition of the definition can be verified directly with a straightforward check, more work is required to demonstrate the first condition.  In order to show that every avoidable formula with reversal on two (or three) variables is e-divisible by some element of $\Phi_2$ ($\Phi_3$, respectively), we eliminate all but a finite number of avoidable formulas, and then complete an exhaustive check using a computer.  Throughout, we let $\Sigma_2=\{x,y\}$, and $\Sigma_3=\{x,y,z\}$, and we work exclusively with formulas with reversal over these alphabets.

\begin{table}[h]
\begin{center}
\def\arraystretch{1.5}
\begin{tabular}{L{2.5cm} C{1cm} L{7cm}}\toprule
Formula & Index & Avoidance Properties\\\toprule

$x\rev{x}$ & $2$ & Avoided by $(01)^\omega$.\\\midrule

$xx$ & $3$ & Avoided by $f^\omega(0)$, where $f=012/02/1$ (This word is also the unique preimage of the Thue-Morse word under the morphism $\delta:\{0,1,2\}^*\rightarrow \{0,1\}^*$ defined by $\delta=011/01/0$; see Section 2.3 of~\cite{Lothaire1997} for details.); longest word on two letters has length $3$.  \\\midrule

$xyx\cdot yxy$ & $3$ & Avoided by $f_2(f_1^\omega(0)),$ where $f_1=01/02/32/31$ and $f_2=01/02/12/21$ \cite{CassaigneThesis}; longest word on two letters has length $8$.\\\midrule

{$xy\cdot yx\cdot\rev{x}\cdot \rev{y}$ \newline
$xy\cdot y\rev{x}\cdot \rev{y}$\newline
$xy\cdot \rev{y}\rev{x}$}
& $3$ & 
{Avoided by $(012)^\omega$; longest word on two letters has length $2$.} \\\midrule

$xyx\cdot \rev{y}$  
& $4$ & See \cite{CMR2016} for $4$-avoidance; longest word on three letters has length $14$. \\\bottomrule
\end{tabular}
\end{center}
\caption{A $2$-avoidance basis $\Phi_2$ for formulas with reversal.}
\label{Avoidance2}
\end{table}

\begin{theorem}\label{Avoidance2Theorem}
The collection
\[
\Phi_2=\{xx,x\rev{x},xyx\cdot yxy,xy\cdot yx\cdot\rev{x}\cdot \rev{y},xy\cdot y\rev{x}\cdot \rev{y},xy\cdot \rev{y}\rev{x},xyx\cdot \rev{y}\} 
\]
is a $2$-avoidance basis for formulas with reversal.
\end{theorem}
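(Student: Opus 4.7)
The plan is to verify the two conditions of Definition~\ref{BasisDef}. The second condition --- that no pair of distinct elements of $\Phi_2$ stand in e-divisibility --- is a finite direct check: an injective morphism respecting d-reversal on $\Sigma_2 \cup \rev{\Sigma_2}$ with length-one images is determined by the values of $h(x)$ and $h(y)$ in $\{x, y, \rev{x}, \rev{y}\}$ subject to $h(x) \notin \{h(y), \rev{h(y)}\}$, giving only eight candidates; for each of the $42$ ordered pairs of distinct elements of $\Phi_2$, I would test these eight morphisms and verify that none sends every fragment of one formula to a factor of the other.

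The substantive part is the first condition: every avoidable formula $\psi$ over $\Sigma_2$ must be e-divided by some element of $\Phi_2$. The plan is to reduce to a finite set of candidate formulas and then finish by computer. The key initial reduction uses the two ``trivial'' basis elements: if some fragment of $\psi$ contains a factor $\alpha\alpha$ with $\alpha \in \Sigma_2 \cup \rev{\Sigma_2}$, then $xx \ediv \psi$ via the morphism $h(x) = \alpha$; if it contains $\alpha\rev{\alpha}$, then $x\rev{x} \ediv \psi$. Hence one may assume that every fragment of $\psi$ alternates strictly between the classes $x^\sharp$ and $y^\sharp$. A case analysis on where reversals appear in a long alternating fragment --- together with the fact recalled in the preliminaries that every length-$\geq 4$ pattern over $\Sigma_2$ has an avoidable flattening and is therefore avoidable --- should show that the presence of any alternating fragment of length $\geq 4$ already forces e-divisibility by $xyx \cdot yxy$ or $xyx \cdot \rev{y}$. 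This bounds fragment length by $3$; passing to $\irr(\psi)$ bounds the number of fragments, leaving only finitely many candidates up to e-equivalence.

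To finish, I would feed the remaining candidate formulas to a computer, filter to the avoidable ones using the classification recalled in the introduction (namely, $\psi$ is unavoidable iff $\psi$ divides some element of $\mathcal{Z}_2 = \{x^\sharp y^\sharp, x^\sharp y x^\sharp\}$), and verify that each avoidable candidate is e-divided by one of the five non-trivial elements of $\Phi_2$: $xyx \cdot yxy$, $xy \cdot yx \cdot \rev{x} \cdot \rev{y}$, $xy \cdot y\rev{x} \cdot \rev{y}$, $xy \cdot \rev{y}\rev{x}$, or $xyx \cdot \rev{y}$. The main obstacle I anticipate is the alternating-fragment length reduction: the case analysis must cover every configuration of reversals in a length-$\geq 4$ alternating fragment and in each subcase produce an explicit injective, d-reversal-respecting, length-one morphism witnessing the e-division, while keeping the remaining candidate set small enough for the computer step to be manageable.
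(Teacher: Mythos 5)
Your overall strategy is essentially the paper's: both reduce to formulas whose fragments are unavoidable patterns over $\Sigma_2$ (your ``alternating fragments of length at most $3$'' is exactly this set, since a pattern over $\Sigma_2\cup\Sigma_2^R$ of length at most $3$ with no factor $\alpha\alpha$ or $\alpha\rev{\alpha}$ divides $x^\sharp y^\sharp$ or $x^\sharp yx^\sharp$ and is hence unavoidable, while every avoidable pattern of length $\geq 4$ is handled by checking its length-$4$ factors), and both finish with a computer search filtered by the $\mathcal{Z}_2$ characterization. The one organizational difference is in the final search: you propose to enumerate all irredundant candidates with short fragments outright, whereas the paper builds the sets $U_k$ of irredundant unavoidable formulas with $k$ fragments incrementally, checking each avoidable formula $\phi\cdot p$ ($\phi\in U_k$, $p$ unavoidable) at the moment it first becomes avoidable, and terminating when $U_5=\emptyset$. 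The paper's organization keeps the search space small because the unavoidable formulas are few; your flat enumeration is still finite but you should be prepared for it to be substantially larger.

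There is, however, one concrete error in your length-reduction step. You claim that any alternating fragment of length $\geq 4$ forces e-divisibility by $xyx\cdot yxy$ or $xyx\cdot\rev{y}$. This is false: take the single-fragment formula $xy\rev{x}\rev{y}$. It is alternating of length $4$ and avoidable (its flattening $xyxy$ is a square), but its length-$3$ factors are $xy\rev{x}$ and $y\rev{x}\rev{y}$, neither of which has the form $aba$ with $a$ a single letter of $\Sigma_2\cup\Sigma_2^R$; since any length-one e-division image of $xyx$ must land on such a factor, neither $xyx\cdot yxy$ nor $xyx\cdot\rev{y}$ e-divides $xy\rev{x}\rev{y}$. (It is instead e-divided by $xy\cdot y\rev{x}\cdot\rev{y}$ via the identity.) The fix is to weaken the claim to ``e-divisibility by \emph{some} element of $\Phi_2$,'' which is exactly what the paper's exhaustive check of the $312$ avoidable patterns of length at most $4$ establishes; with that correction your reduction goes through and the rest of your plan is sound.
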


\begin{proof}
The second condition of Definition \ref{BasisDef} is easy to check.  For the first condition, it suffices to show that every avoidable formula with reversal over the alphabet $\Sigma_2$ is divisible by some member of $\Phi_2$.

First of all, we note that there are
\[
4+4^2+4^3+4^4=340
\]
distinct nonempty \emph{patterns} with reversal over $\Sigma_2$ of length at most $4$, corresponding to formulas with reversal with exactly one fragment.  Using the characterization of unavoidable formulas with reversal on two letters, we find that $28$ of these patterns are unavoidable, while the remaining $312$ are avoidable.  We check that some member of $\Phi_2$ e-divides each of these $312$ avoidable patterns.  Since every pattern with reversal of length at least $4$ is avoidable, and every pattern with reversal of length greater than $4$ is e-divisible by each of its factors of length $4$ (through the identity map), we conclude that every avoidable pattern with reversal over $\Sigma_2$ is e-divisible by some member of $\Phi_2$.

It now suffices to show that every avoidable formula with reversal over $\Sigma_2$ whose fragments are all unavoidable is e-divisible by some member of $\Phi_2$.  In fact, it suffices to deal with irredundant formulas since $\phi$ is e-equivalent to $\irr(\phi)$ for any formula with reversal $\phi$.  Further, we only need to consider avoidable formulas where the deletion of any fragment leaves an unavoidable formula, since every avoidable formula with reversal on unavoidable fragments is e-divisible by such a formula with reversal.  But any such formula on $k\geq 2$ fragments can be written
\[
\phi\cdot p,
\]
where $\phi$ is an irredundant unavoidable formula with reversal over $\Sigma_2$ with $k-1$ fragments (all of which must necessarily be unavoidable) and $p$ is an unavoidable pattern with reversal.  

Let $P$ denote the set of all unavoidable patterns with reversal over $\Sigma_2$.  We write an algorithm (see Figure \ref{Algorithm1}) that takes as input the set $U_k$ of all irredundant unavoidable formulas with reversal over $\Sigma_2$ with exactly $k$ fragments (up to e-equivalence), and outputs the set $U_{k+1}$ of all irredundant unavoidable formulas with reversal over $\Sigma_2$ with exactly $k+1$ fragments (up to e-equivalence).  Along the way, we check that every irredundant avoidable formula of the form
\[
\phi\cdot p,
\] 
where $\phi\in U_k$ and $p\in P$, is e-divisible by some member of $\Phi_2$.  We repeatedly apply this algorithm starting at $k=1$ until we find that $U_5$ is empty, at which point we are done.
\end{proof}

\begin{figure}[h]
\begin{lstlisting}
given: $P$, the set of all unavoidable 
patterns with reversal over $\Sigma_2$.

input: $U_k$, the set of all irredundant unavoidable
formulas with reversal over $\Sigma_2$ with 
exactly $k$ fragments (up to e-equivalence).

output: $U_{k+1}$, the set of all irredundant unavoidable
formulas with reversal over $\Sigma_2$ with 
exactly $k+1$ fragments (up to e-equivalence).

$U_{k+1}=\{\}$

for $\phi$ in $U_k$:
  for $p$ in $P$:
    $\phi'=\phi\cdot p$    
    
    if $\phi'$ has a redundant fragment:
      continue 
    
    if $\phi'$ is unavoidable:
      if $\phi'$ is not e-equivalent to any member of $U_{k+1}$:
        add $\phi'$ to $U_{k+1}$
    else:
      check that some element of $\Phi_2$ e-divides $\phi'$


return $U_{k+1}$
\end{lstlisting}
\caption{The algorithm used in the proof of Theorem \ref{Avoidance2Theorem} to check that every avoidable formula over two variables is e-divisible by some element of $\Phi_2$.}
\label{Algorithm1}
\end{figure}

\begin{table}
\begin{center}
\def\arraystretch{1.3}
\begin{tabular}{L{4.4cm} C{1cm} L{7cm}}\toprule
Minimal Formula & Index & Avoidance Properties\\\toprule

{$xy\cdot xz\cdot \rev{y}z$\newline
$x\rev{y}\cdot y\rev{z}\cdot z\rev{x}$}
& $2$ &
Avoided by $(01)^\omega$.\\\midrule

$xyzyx\cdot zyxyz$ & $2$ & See \cite{CircularFormulas}.\\\midrule

{$xyzyx\cdot zyx\rev{y}z$ (and rev.) \newline
$xyzyx\cdot z\rev{y}x\rev{y}z$ \newline
$xyz\rev{y}x\cdot zyx\rev{y}z$ \newline
$xyz\rev{y}x\cdot z\rev{y}xyz$}  
& $[2,4]$ & 
Avoided by $g^\omega(0)$ or its reversal (see Section \ref{xyzyxSection}).  We have found a binary word of length $1000$ which simultaneously avoids all of these formulas. \\\midrule

{$xy\cdot xz\cdot yz\cdot \rev{y}$  \newline
$xy\cdot xz\cdot \rev{y}\rev{z}$ (and rev.)\newline
$xy\cdot xz\cdot y\rev{z}\cdot \rev{y}$ (and rev.) \newline
$xy\cdot \rev{x}\rev{z}\cdot yz\cdot \rev{y}$ \newline
$xy\cdot \rev{x}\rev{z}\cdot \rev{y}z$ }
& $3$ &
Avoided by $(012)^\omega$; longest word on two letters has length $3$.\\\midrule

$xyzx\cdot yzxy\cdot zxyz$ 
& $3$ &
See \cite{CircularFormulas}; longest word on two letters has length $44$.
\\\midrule

{$xyzx\cdot yzxy\cdot zyz$ (and rev.)
} 
& $3$ &
See \cite{CircularFormulas}; longest word on two letters has length $16$. 
\\\midrule

{$xyzx\cdot yzxy\cdot \rev{z}$\newline
$xyzx\cdot y\rev{z}xy$}  
& $[3,4]$ & 
Avoided by $h(g^\omega(0))$ (see Section \ref{xyzxSection}); longest word on two letters has length $8$.  We have found a ternary word of length $1000$ that simultaneously avoids both of these formulas. \\\midrule

$xy\cdot xz\cdot yx\cdot zx\cdot zy$
& $4$ &
Avoided by $\Omega^\omega(0)$ \cite{BMT1989}; longest word on three letters has length $7$.\\\midrule

{$xy\cdot yz\cdot zx\cdot \rev{x}\cdot \rev{y}\cdot \rev{z}$\newline
$xy\cdot yz\cdot z\rev{x}\cdot \rev{y}\cdot\rev{z}$ \newline
$xy\cdot yz\cdot \rev{z}\rev{x}\cdot\rev{y}$} 
& $4$ & 
Avoided by $(0123)^\omega$; longest word on three letters has length $4$.\\\midrule

$xyzx\cdot \rev{y}\cdot \rev{z}$ 
& 5 &
See \cite{CMR2016}; longest word on four letters has length $45$.\\\bottomrule
\end{tabular}
\end{center}
\caption{A $3$-avoidance basis $\Phi_3$ for formulas with reversal, along with the formulas of $\Phi_2$ given in Table \ref{Avoidance2}.  Throughout, $g=01/2/031/3,$ $h=01/12/20/3,$ and $\Omega=01/21/03/23$.  The notation $[a,b]$ indicates that the exact avoidability index is unknown, but that it is between $a$ and $b$ inclusive.}
\label{Avoidance3}
\end{table}

\begin{theorem}\label{Avoidance3Theorem}
The collection $\Phi_3$ is a $3$-avoidance basis for formulas with reversal.
\end{theorem}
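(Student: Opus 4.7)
The plan is to mirror the proof of Theorem~\ref{Avoidance2Theorem}, adapted to the three-variable setting. The second condition of Definition~\ref{BasisDef} is a finite mechanical check: for each ordered pair $(\phi_1,\phi_2)$ of distinct elements of $\Phi_3$, verify that no morphism respecting d-reversal with $|h(x)|=1$ sends every fragment of $\phi_1$ to a factor of a fragment of $\phi_2$. For the first condition we must show that every avoidable formula with reversal over $\Sigma_3$ is e-divisible by some element of $\Phi_3$, and this splits into handling single-fragment formulas (patterns) and multi-fragment formulas.

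For patterns, first note that by the flattening result any pattern with reversal over $\Sigma_3$ of length at least $2^3=8$ is avoidable. I would fix a threshold $L \geq 7$ (large enough that all patterns of length greater than $L$ reduce to the shorter case), enumerate the $\sum_{\ell=1}^L 6^\ell$ patterns with reversal over $\Sigma_3$ of length at most $L$, and classify each as avoidable or unavoidable by checking divisibility of the pattern against the three members of $\mathcal{Z}_3$ (using the characterization recalled from \cite{CMR2017}). For each avoidable pattern of length $\leq L$, verify computationally that some $\phi\in\Phi_3$ e-divides it. To handle patterns of length greater than $L$, observe that every length-$L$ factor e-divides the larger pattern through the identity map, so once I check that at least one length-$L$ factor of each such pattern is avoidable (equivalently, that no pattern of length $L+1$ has all its length-$L$ factors unavoidable), transitivity of e-divisibility finishes the case.

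For multi-fragment formulas I would follow the reduction used in Theorem~\ref{Avoidance2Theorem}: because every formula is e-equivalent to its irredundant version, and every avoidable formula contains a minimally avoidable sub-formula (in the sense that deleting any fragment yields an unavoidable formula), it suffices to check formulas of the form $\phi\cdot p$ where $\phi$ is an irredundant unavoidable formula over $\Sigma_3$ with every fragment unavoidable and $p$ is an unavoidable pattern over $\Sigma_3$. I would run the analogue of the algorithm in Figure~\ref{Algorithm1}: maintain a list $U_k$ of irredundant unavoidable formulas over $\Sigma_3$ with exactly $k$ fragments (up to e-equivalence), starting from $U_1$ obtained in the pattern step; for each $\phi\in U_k$ and each unavoidable pattern $p$, test $\phi'=\phi\cdot p$ for redundancy, then for unavoidability (via the $\mathcal{Z}_3$ characterization), adding $\phi'$ to $U_{k+1}$ if unavoidable and non-redundant (modulo e-equivalence) and otherwise verifying that some $\phi_0\in\Phi_3$ e-divides $\phi'$. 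Iterate until some $U_{k+1}$ is empty.

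The main obstacle is computational scale rather than conceptual. With three variables, the alphabet $\Sigma_3\cup\rev{\Sigma_3}$ has size six, the set of unavoidable patterns over $\Sigma_3$ and the set of elements of $\Phi_3$ are both substantially larger than in the two-variable case, and the iterative enlargement $U_k\mapsto U_{k+1}$ produces a much bigger search tree before terminating. To keep the search tractable I would prune aggressively using e-equivalence canonicalization (a fixed representative for each e-equivalence class, obtained by normalizing the variable names and d-reversal choices), cache e-divisibility witnesses, and exploit the observation that many candidate formulas $\phi'$ arising in the loop will already be e-divisible by $\Phi_2\subset\Phi_3$ and can be dispatched immediately. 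The genuine mathematical content beyond bookkeeping is encapsulated in the flattening bound and in the $\mathcal{Z}_3$ characterization, so once the enumeration terminates the theorem follows.
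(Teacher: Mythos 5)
Your outline matches the paper's opening moves exactly: the finite check of the second condition, the exhaustive treatment of patterns up to length $8$ via the flattening bound and the $\mathcal{Z}_3$ characterization, and the reduction of multi-fragment formulas to the form $\phi\cdot p$ handled by the algorithm of Figure~\ref{Algorithm1}. But there is a concrete missing idea. The paper reports that running that algorithm over all of $\Sigma_3$ ``proved to be too long without some further elimination,'' and the substance of its proof is precisely the further elimination you omit: a case split by the number of two-way variables, supported by three structural lemmas. Lemma~\ref{ThreeTwoWay} shows that when all three variables are two-way one may restrict to fragments of length at most $2$; Lemma~\ref{TwoTwoWay} shows that with exactly two two-way variables one may restrict to fragments of length at most $4$ over $\{x,y,z,\rev{x},\rev{y}\}$; Lemma~\ref{OneTwoWay} lets one discard, when only $x$ is two-way, all fragments containing a factor flattening to $zxy$. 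These reductions are what cap the iteration (at $9$, $16$, and $10$ fragments respectively) and make the search finish; the case with no two-way variables is then inherited from Clark's classical $3$-avoidance basis.

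Your proposed mitigations --- canonical representatives for e-equivalence classes, caching divisibility witnesses, early dispatch via $\Phi_2\subset\Phi_3$ --- are generic bookkeeping that do not attack the actual source of the blowup, namely that unavoidable patterns over $\Sigma_3\cup\rev{\Sigma_3}$ can have length up to $7$ over a six-letter alphabet, so the set $P$ and the families $U_k$ are enormous before any structural pruning. Your closing claim that ``the genuine mathematical content beyond bookkeeping is encapsulated in the flattening bound and in the $\mathcal{Z}_3$ characterization'' is therefore the precise point at which the proposal falls short: the combinatorial lemmas bounding fragment length and variable usage are the additional mathematical content needed, and without them (or something playing the same role) there is no evidence your enumeration terminates in practice, whereas the paper's authors explicitly found that it did not.
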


\begin{proof}
We use the same technique as in the proof of Theorem \ref{Avoidance2Theorem}.  The second condition of Definition \ref{BasisDef} is easily checked, and it remains to show that every avoidable formula over $\Sigma_3=\{x,y,z\}$ is e-divisible by some element of $\Phi_3$.

We first check that every avoidable pattern with reversal over $\Sigma_3$ is divisible by some element of $\Phi_3$ by exhaustively checking all patterns with reversal over $\Sigma_3$ of length at most $8$ (we can quickly and easily reduce the number of patterns that need to be checked by eliminating any patterns having factors that flatten to squares).  Then, as in the proof of Theorem \ref{Avoidance2Theorem}, it suffices to show that every avoidable formula of the form $\phi\cdot p$ is divisible by some element of $\Phi_3$, where $\phi$ is an irredundant unavoidable formula with reversal over $\Sigma_3$, and $p$ is an unavoidable pattern with reversal over $\Sigma_3$.  We employ an algorithm analogous to that given in Figure \ref{Algorithm1} to do so.  However, the time required to complete the search proved to be too long without some further elimination.

First of all, we break up the formulas with reversal by the number of two-way variables that they contain, and handle each group separately.
\begin{itemize}
\item We show that every avoidable formula with reversal over $\Sigma_3$ in which all three variables are two-way is e-divisible by some avoidable formula with reversal whose fragments have length at most $2$; this is Lemma \ref{ThreeTwoWay}.  So to show that any formula with reversal over $\Sigma_3$ in which all three variables are two-way is e-divisible by some element of $\Phi_3$, it suffices to check that every avoidable formula with reversal over $\Sigma_3$ whose fragments are all unavoidable and all have length at most $2$ is e-divisible by some element of $\Phi_3$.  This check is completed by modifying the algorithm in Figure \ref{Algorithm1} slightly, and applying it repeatedly until we have exhausted the irredundant unavoidable formulas over $\Sigma_3$ with fragments of length at most $2$ (there are none having $9$ or more fragments).
\item We show that every avoidable formula with reversal over $\Sigma_3$ in which exactly two variables are two-way is e-divisible by some avoidable formula with reversal whose fragments have length at most $4$, and which has at most two two-way variables; this is Lemma \ref{TwoTwoWay}.  So it suffices to check that every avoidable formula with reversal on variables $\{x,y,z,\rev{x},\rev{y}\}$, whose fragments are all unavoidable and have length at most $4$, is divisible by some element of $\Phi_3$.  Again, we use a modification of the algorithm given in Figure \ref{Algorithm1} to complete the check; we find there are no unavoidable formulas over $\Sigma_3$ satisfying the given conditions and having $16$ or more fragments.
\item Let $\phi$ be an avoidable formula with reversal over $\Sigma_3$ with exactly one two-way variable.  Without loss of generality, assume that $x$ is two-way in $\phi.$  We show that if $\phi$ has factors that flatten to $yxz$ and $zxy$, then $\phi$ is e-divisible by some element of $\Phi_3$; this is Lemma \ref{OneTwoWay}.  So this time, when we employ a modification of the algorithm given in Figure 1, we consider unavoidable fragments on variables $\{x,y,z,\rev{x}\}$, and we may exclude any fragments containing a factor that flattens to $zxy$ (without loss of generality).  There are no unavoidable formulas over $\Sigma_3$ having $10$ or more fragments satisfying these conditions.
\item We know that every avoidable formula with reversal over $\Sigma_3$ with no two-way variables (or equivalently, every classical formula on three variables) is divisible by some member of $\Phi_3$ since $\Phi_3$ contains a $3$-avoidance basis for classical formulas; c.f.\ \cite{ClarkThesis}.  \qedhere
\end{itemize}
\end{proof}

\begin{lemma}\label{ThreeTwoWay}
Let $\phi$ be a formula with reversal over $\Sigma_3$ in which all three variables are two-way.  If $\phi$ is avoidable and has a factor of length $3$, then $\phi$ is e-divisible by some avoidable formula with reversal over $\Sigma_3$ whose fragments all have length at most $2$.
\end{lemma}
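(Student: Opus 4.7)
\textit{Proof Proposal.} My plan is a direct case analysis on a length-3 factor $f=a_1a_2a_3$ of $\phi$, normalized up to the e-division symmetries (permutations of the three variables together with d-reversal on individual variables). In each case I will exhibit an explicit avoidable formula $\psi$ with all fragments of length at most $2$ such that $\psi \ediv \phi$.

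The cases split according to the flattening $f^\flat$. If some two adjacent positions of $f$ agree as variables (that is, $a_i^\flat = a_{i+1}^\flat$ for some $i$), then $f$ contains a length-$2$ factor of the form $aa$ or $a\rev{a}$, and I take $\psi = xx$ or $\psi = x\rev{x}$, respectively; both lie in the $1$-basis $\Phi_1$ and e-divide $\phi$ by sending $x$ to the relevant letter. If $f^\flat$ is palindromic of the form $aba$ with $a \neq b$, then after normalization via d-reversal I may assume $f \in \{xyx, xy\rev{x}\}$; invoking the two-way hypothesis to supply $\rev{y}$ (and, in the first sub-case, also $\rev{x}$) as length-$1$ factors of $\phi$, I take $\psi = xy \cdot yx \cdot \rev{x} \cdot \rev{y}$ or $\psi = xy \cdot y\rev{x} \cdot \rev{y}$, both of which lie in $\Phi_2$.

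The main obstacle is the remaining case, where $f^\flat = xyz$ with all three variables distinct; after normalization via d-reversal on $x$ and $z$, I may assume $f \in \{xyz, x\rev{y}z\}$. Here the length-$2$ factors of $f$ alone, combined with the singleton reverses, do not form an avoidable formula ($xy \cdot yz$ itself is unavoidable, since any sufficiently long word has a length-$3$ factor $abc$ realizing an encounter via $h(x)=a,\ h(y)=b,\ h(z)=c$). My plan is to argue by contrapositive: suppose no avoidable length-$\leq 2$ formula e-divides $\phi$. Since any sub-formula of an unavoidable formula is unavoidable, it follows that $\restle{\phi}{2}$ itself is unavoidable. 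By the classification of unavoidable formulas on three letters, $\restle{\phi}{2}$ then divides some $Z \in \mathcal{Z}_3 = \{x^\sharp y^\sharp z^\sharp,\ x^\sharp y^\sharp z x^\sharp y^\sharp,\ x^\sharp y x^\sharp z x^\sharp y x^\sharp\}$ via a morphism respecting d-reversal.

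The remaining task, which I expect to be the bulk of the work, is to leverage the length-$3$ factor $f$ and the two-way hypothesis to extend this divisibility into a witness that $\phi$ itself divides some element of $\mathcal{Z}_3$, contradicting the avoidability of $\phi$. I anticipate this reduces to a finite check: for each of the three possible targets $Z$, one must verify that the image of a fragment of $\phi$ containing $f$ fits as a factor of a fragment of $Z$, and that the remaining (longer) fragments of $\phi$ likewise map to factors of $Z$ consistently with the d-reversal constraint. The rigidity of the Zimin-type structure of $\mathcal{Z}_3$ should force such an extension to exist, yielding the desired contradiction.
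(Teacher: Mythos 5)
Your handling of the degenerate cases is fine and matches the paper: a repeated adjacent variable yields $xx$ or $x\rev{x}$, and a factor flattening to $xyx$ yields (using two-wayness to supply the singleton reversed fragments) one of the avoidable $\Phi_2$-formulas $xy\cdot yx\cdot\rev{x}\cdot\rev{y}$ or $xy\cdot y\rev{x}\cdot\rev{y}$; the paper phrases this as $\restle{\phi}{2}$ being avoided by $(012)^\omega$, which is the same thing. The gap is in the main case $f^\flat=xyz$. There you reduce to showing that if $\restle{\phi}{2}$ is unavoidable then $\phi$ itself is unavoidable, and you propose to do this by taking a morphism witnessing $\restle{\phi}{2}\div Z$ for some $Z\in\mathcal{Z}_3$ and ``extending'' it over the longer fragments of $\phi$. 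That step is not justified and is not obviously available: divisibility of the length-$\leq 2$ truncation constrains only the short factors, nothing in your setup controls where the long fragments of $\phi$ must land in $Z$, and the appeal to ``rigidity of the Zimin-type structure'' is not an argument. As written, the central case is a plan rather than a proof.

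The paper closes this case with a concrete dichotomy that your proposal is missing. Since $\phi^\flat$ is square-free (your first case) and contains $xyz$, look at $\rest{\phi}{2}^\flat$. If it is exactly $\{xy,yz\}$, then every factor of $\phi^\flat$ is a factor of $xyz$, so every fragment of $\phi$ is literally a factor of $x^\sharp y^\sharp z^\sharp$, and $\phi$ divides that unavoidable member of $\mathcal{Z}_3$ through the identity --- contradicting the avoidability of $\phi$ with no morphism extension required. Otherwise $\rest{\phi}{2}^\flat$ contains one of $xz$, $yx$, $zx$, $zy$, and in each of these four subcases $\restle{\phi}{2}$ --- which, by two-wayness, contains all of $x,\rev{x},y,\rev{y},z,\rev{z}$ as fragments, and this is essential since for instance the classical $xy\cdot yx$ alone does occur in $(012)^\omega$ --- is avoided by $(012)^\omega$, or by $(0123)^\omega$ in the $zx$ subcase. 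So the ``finite check'' you anticipate does exist, but its engine is this analysis of the flattened length-$2$ factors, not an extension of a divisibility morphism; you need to supply that analysis to complete the proof.
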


\begin{proof}
Let $\phi$ be as in the lemma statement, and suppose that $\phi$ is avoidable and has a factor $p$ of length $3$.  We may assume that $\phi^\flat$ contains no squares, as otherwise $\phi$ is e-divisible by either $xx$ or $x\rev{x}$.  So up to relabelling, $p^\flat=xyx$ or $p^\flat=xyz.$  We show that $\restle{\phi}{2}$ is avoidable in each case.

\begin{description}

\item[Case I: $p^\flat=xyx$.] 

We claim that $\restle{\phi}{2}$, which e-divides $\phi$ through the inclusion map, is avoidable.  Note that $\restle{\phi}{2}^\flat$ contains $xy$ and $yx$.  Since both $x$ and $y$ are two-way in $\phi$ (and hence also in $\restle{\phi}{2}$), we see that $\restle{\phi}{2}$ is avoided by $(012)^\omega.$

\item[Case II: $p^\flat=xyz$.] 

If $\rest{\phi}{2}^\flat=\{xy,yz\}$, then $\phi$ divides the unavoidable formula $x^\sharp y^\sharp z^\sharp$, meaning that $\phi$ is unavoidable, contradicting our assumption.  So we may assume that $\rest{\phi}{2}^\flat$ contains at least one element from $\{xz,yx,zx,zy\}.$  This gives us four subcases.
\begin{description}
\item [Subcase a: $xz\in\rest{\phi}{2}^\flat$.] 

Since $\{xy,xz,yz\}\subseteq\rest{\phi}{2}^\flat$, we see that $\restle{\phi}{2}$ is avoided by $(012)^\omega$.

\item [Subcase b: $yx\in\rest{\phi}{2}^\flat$.] 

Since $\{xy,yx\}\subseteq\rest{\phi}{2}^\flat$, we see that $\restle{\phi}{2}$ is avoided by $(012)^\omega$.

\item [Subcase c: $zx\in\rest{\phi}{2}^\flat$.] 

Since $\{xy,yz,zx\}\subseteq\rest{\phi}{2}^\flat$, we see that $\restle{\phi}{2}$ is avoided by $(0123)^\omega$.

\item [Subcase d: $zy\in\rest{\phi}{2}^\flat$.] 

Since $\{yz,zy\}\subseteq\rest{\phi}{2}^\flat$, we see that $\restle{\phi}{2}$ is avoided by $(012)^\omega$.  \qedhere
\end{description}
\end{description}
\end{proof}

\begin{lemma}\label{TwoTwoWay}
Let $\phi$ be a formula with reversal over $\Sigma_3$ in which exactly two variables are two-way.  If $\phi$ is avoidable and has a factor of length $5$, then $\phi$ is e-divisible by some avoidable formula with reversal over $\Sigma_3$ whose fragments all have length at most $4$.
\end{lemma}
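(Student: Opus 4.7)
The plan is to find an avoidable subformula of $\restle{\phi}{4}$; this automatically e-divides $\phi$ via the identity morphism and has all fragments of length at most $4$, which is exactly what the lemma demands.

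Without loss of generality assume that $x, y$ are the two-way variables and $z$ the one-way variable of $\phi$, and let $p$ be a length-$5$ factor of $\phi$. If $\phi^\flat$ contains a square, then $\phi$ is e-divisible by either $xx$ or $x\rev{x}$, both of length $2$, and we are done. Otherwise $p^\flat$ is a square-free word of length $5$ over $\{x,y,z\}$; since no square-free binary word has length greater than $3$, such a $p^\flat$ must involve all three letters.

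I would then case-split on $p^\flat$ among the square-free length-$5$ ternary words, up to the symmetry $x \leftrightarrow y$ swapping the two two-way variables. In each case the first strategy, following Lemma \ref{ThreeTwoWay}, is to show that $\restle{\phi}{2}$ (which e-divides $\restle{\phi}{4}$ through inclusion) is already avoided by a short-period word such as $(012)^\omega$ or $(0123)^\omega$, using the structure of $\rest{p^\flat}{2}$ together with the two-way property of $x$ and $y$. The argument is a morphism count: the forced coexistence of certain forward/reverse length-$2$ flattenings, combined with the cyclic periodicity of the candidate word, leaves no room for a morphism respecting reversal.

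The main obstacle is the one-way variable $z$: because $\rev{z}$ does not appear in $\phi$, an avoiding morphism faces no reversal constraint on $f(z)$, so $f(z)$ may be any non-empty word. This flexibility defeats the length-$2$ argument in the cases where $z$ plays a central role in the length-$2$ structure---for example when $p^\flat = xyzxy$, the three length-$2$ factors $xy, yz, zx$ can be simultaneously imaged into $(0123)^\omega$ via $f(x) = 0,\ f(y) = 1,\ f(z) = 23$. In such cases I expect to need length-$4$ fragments of $\restle{\phi}{4}$: these surround $z$ with two-way letters and thus pin down $f(z)$ via the cyclic structure of the avoiding word, ruling out the problematic instantiations. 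In the very hardest subcases $\restle{\phi}{4}$ itself could conceivably divide an element of $\mathcal{Z}_3$; since $\phi$ is assumed avoidable, $\phi$ must then contain a further fragment obstructing its own division into $\mathcal{Z}_3$, and a length-$\leq 4$ factor of that obstructing fragment supplies the additional piece needed to turn $\restle{\phi}{4}$ (or a suitable subformula) into an avoidable formula. Carrying out this finite enumeration of cases and identifying the right avoiding word (or morphic word from Tables~\ref{Avoidance2}--\ref{Avoidance3}) in each is the delicate part of the proof.
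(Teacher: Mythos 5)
Your overall framework (exhibit an avoidable subformula of $\restle{\phi}{4}$ and let it e-divide $\phi$ through inclusion) is the right one, and your treatment of the case where the one-way variable occurs exactly once is essentially the paper's: there the factor is forced, up to symmetry, to flatten to $xyzyx$ or $yzxyz$ (one-way letter in the middle), and the paper resolves the latter exactly as you anticipate, by observing that if no extra length-$2$ factor is present then $\phi$ divides the unavoidable $x^\sharp y^\sharp z x^\sharp y^\sharp$, contradicting avoidability, while any extra length-$2$ factor makes $\restle{\phi}{2}$ or $\restle{\phi}{3}$ avoidable by $(012)^\omega$.

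The genuine gap is the case you never isolate: the one-way variable may occur \emph{twice} in the length-$5$ factor $p$, so that $p$ contains a factor flattening to $zxz$ or $zxyz$ (in your labelling, with $z$ one-way). Here your proposed mechanism cannot work. Take $\phi=yzxzy\cdot\rev{x}\cdot\rev{y}$: this satisfies all hypotheses of the lemma, yet $\restle{\phi}{2}$ is \emph{unavoidable} --- every infinite word over a finite alphabet contains $aua$ for some letter $a$ and some (possibly empty) $u$, and mapping $x,y\mapsto a$ and $z\mapsto u$ (or $z\mapsto a$ when $aa$ occurs) realizes all of $yz,zx,xz,zy,\rev{x},\rev{y}$. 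So no periodic word avoids any restriction of $\phi$ to length-$2$ fragments, and no "pinning down of $f(z)$ by cyclic structure" can rescue the argument, because the correct avoidable witness here is $zxz\cdot\rev{x}$ (using that $x$ is two-way, so $\rev{x}$ is a factor of $\phi$), which is e-equivalent to $\psi_1=xyx\cdot\rev{y}$ and has avoidability index $4$; similarly a factor flattening to $zxyz$ yields the witness $zxyz\cdot\rev{x}\cdot\rev{y}\sim\psi_2$ of index $5$. These are not avoided by any short periodic word --- their avoidance is the deep content of \cite{CMR2016} --- so the case split must be organized around the number of occurrences of the one-way variable, with the repeated-occurrence cases discharged by citing the avoidability of $\psi_0,\psi_1,\psi_2$ rather than by exhibiting an explicit avoiding word. (A smaller imprecision: a square in $\phi^\flat$ of length $\ge 4$, e.g.\ a factor $ab\rev{a}\rev{b}$ flattening to $abab$, need not make $\phi$ e-divisible by $xx$ or $x\rev{x}$; it is still a length-$4$ avoidable witness, but for a different reason.)
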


\begin{proof}
Let $\phi$ be as in the lemma statement, and suppose that $\phi$ is avoidable and has a factor $p$ of length $5.$  Without loss of generality, let $x$ be the unique one-way variable in $\phi.$  We may assume that neither $yy$ nor $zz$ is a factor of $p^\flat$, as otherwise $\phi$ is properly e-divisible by either $xx$ or $x\rev{x}.$  Further, we may assume that neither $yzy$ nor $zyz$ are factors of $p^\flat$, as otherwise $\restle{\phi}{2}$ is avoided by $(012)^\omega$.

If $x$ appears twice in $p$, then either $xx$, $xyx$, or $xyzx$ is a factor of $p$ (up to relabelling $y$ and $z$).  But then $\phi$ is properly e-divisible by avoidable formula with reversal $xx$, $xyx\cdot \rev{y},$ or $xyzx\cdot \rev{y}\cdot \rev{z},$ respectively.

So we may assume that $x$ appears at most once in $p$.  But since $yy$, $zz,$ $yzy,$ and $zyz$ are not factors of $p^\flat$, we see that $x$ must appear precisely in the middle of $p.$  Without loss of generality, we have two cases.

\begin{description}

\item[Case I: $p^\flat=yzxzy$.] Since $\rest{\phi}{2}^\flat$ contains $\{yz,zy\}$, $\restle{\phi}{2}$ is avoided by $(012)^\omega$, and hence $\phi$ is not minimal.

\item[Case II: $p^\flat=yzxyz$.] 

If $\rest{\phi}{2}^\flat=\{xy,yz,zx\},$ then since $x$ can appear at most once in any fragment of $\phi$ by the argument above, every fragment of $\phi$ flattens to some factor of $yzxyz$.  However, then $\phi$ divides the unavoidable formula $x^\sharp y^\sharp z x^\sharp y^\sharp$, and thus $\phi$ is unavoidable.  So we may assume that $\rest{\phi}{2}^\flat$ contains some factor from $\{xz,yx,zy\}$.
\begin{description}
\item [Subcase a: $xz\in\rest{\phi}{2}^\flat$.] 

Since $\{xz,xyz\}\subseteq\restle{\phi}{3}^\flat$, we see that $\restle{\phi}{3}$ is avoided by $(012)^\omega$.

\item [Subcase b: $yx\in\rest{\phi}{2}^\flat$.] 

Since $\{yx,yzx\}\subseteq\restle{\phi}{3}^\flat$, we see that $\restle{\phi}{3}$ is avoided by $(012)^\omega$.

\item [Subcase c: $zy\in\rest{\phi}{2}^\flat$.] 

Since $\{yz,zy\}\subseteq\restle{\phi}{2}^\flat$, we see that $\restle{\phi}{2}$ is avoided by $(012)^\omega$.  \qedhere
\end{description}

\end{description}  
\end{proof}

\begin{lemma}\label{OneTwoWay}
Let $\phi$ be an avoidable formula with reversal over $\Sigma_3$ in which $x$ is two-way while $y$ and $z$ are one-way.  If $yxz$ and $zxy$ are both factors of $\phi^\flat$, then $\phi$ is e-divisible by some element of $\Phi_3$.
\end{lemma}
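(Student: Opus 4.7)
The plan is to prove the lemma by a structured case analysis, exhibiting in each case an explicit element of $\Phi_3$ that e-divides $\phi$. First, I may assume without loss of generality that $\phi$ is not e-divisible by either $xx$ or $x\rev{x}$, since both lie in $\Phi_3$ and either case immediately finishes the proof. Under this assumption $\phi$ contains no factor $aa$ for any $a\in\{x,\rev{x},y,z\}$ and no factor $x\rev{x}$ or $\rev{x}x$, so $x$ and $\rev{x}$ are always flanked in $\phi$ by letters from $\{y,z\}$ (or by fragment boundaries).

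Next I case-split on the identity of the length-3 factors of $\phi$ whose flattenings are $yxz$ and $zxy$: the middle letter of each is either $x$ or $\rev{x}$, giving four main cases. Simultaneously, since $x$ is two-way, both $x$ and $\rev{x}$ occur somewhere in $\phi$; by the step-one reductions any additional occurrence must be flanked by letters from $\{y,z\}$ (or sit at a fragment boundary). Combining this information with the known length-3 factors produces a short list of configurations of length-2 and length-3 factors that $\phi$ must contain.

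In each configuration I identify a matching element of $\Phi_3$ with exactly one two-way variable, mapped to $x$, whose fragments all appear as factors of $\phi$, thereby establishing the e-division. The small candidates $xyx\cdot\rev{y}$, $xy\cdot xz\cdot\rev{y}z$, and $xy\cdot xz\cdot yz\cdot\rev{y}$ handle the majority of configurations; the remaining ones---where $\phi$ is forced to contain a specific length-4 or length-5 factor---are handled by the longer one-two-way elements $xyzyx\cdot zyx\rev{y}z$, $xyz\rev{y}x\cdot z\rev{y}xyz$, $xyzx\cdot yzxy\cdot\rev{z}$, and their mirror variants from Table~\ref{Avoidance3}.

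The main obstacle will be exhausting the case analysis, ensuring that every forced configuration admits a matching divisor. In some subcases I expect to need the avoidability hypothesis on $\phi$ to rule out configurations in which $\phi$ would divide the unavoidable formula $x^\sharp yx^\sharp zx^\sharp yx^\sharp\in\mathcal{Z}_3$---the unique member of $\mathcal{Z}_3$ containing both $yxz$ and $zxy$ as length-3 factors---since otherwise no contradiction on the structure of $\phi$ would arise and no small divisor would be forced. Verifying that the finite list of remaining configurations is covered by the listed elements of $\Phi_3$ is then a mechanical (but bookkeeping-heavy) check.
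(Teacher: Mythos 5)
Your opening reductions and your inventory of candidate divisors essentially match the paper's, and you correctly locate the one place where the avoidability hypothesis must be invoked (against $x^\sharp yx^\sharp zx^\sharp yx^\sharp$). But there is a gap at the centre of the plan: you claim that the adjacency constraints together with the two given length-$3$ factors ``produce a short list of configurations of length-$2$ and length-$3$ factors'' from which a divisor can be read off, and this cannot work. After the easy eliminations (no squares, no $yz$ or $zy$, no factor flattening to $yxy$ or $zxz$ --- these are exactly the cases your small candidates $xx$, $x\rev{x}$, $xyx\cdot\rev{y}$ and $xy\cdot xz\cdot yz\cdot\rev{y}$ dispose of), every fragment of $\phi^\flat$ is a factor of the period-$4$ word $\dots xyxzxyxz\dots$, and the factors of length at most $3$ no longer separate the avoidable from the unavoidable situation: the unavoidable formula $x^\sharp yx^\sharp zx^\sharp yx^\sharp$ and the hard avoidable residual case both yield precisely the flattened short factors $\{xy,yx,xz,zx,xyx,xzx,yxz,zxy\}$. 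In particular, no case split on whether the middle letters of the $yxz$- and $zxy$-factors are $x$ or $\rev{x}$, and no bookkeeping over short factors, can decide which divisor applies. The missing idea is a per-fragment counting argument: if $z$ occurs at most once in every single fragment, then every fragment flattens to a factor of $xyxzxyx$ and $\phi$ divides $x^\sharp yx^\sharp zx^\sharp yx^\sharp$, contradicting avoidability; hence some fragment contains $z$ twice, and the adjacency constraints force that fragment to contain a factor flattening to $zxyxz$. Symmetrically, some fragment contains a factor flattening to $yxzxy$. Only at this point do your long candidates become applicable, via a finite check (over the $x$ versus $\rev{x}$ decorations of the four interior positions) that the pair of forced length-$5$ factors always constitutes a formula e-equivalent to one of the elements of $\Phi_3$ flattening to $xyzyx\cdot zyxyz$.

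A smaller correction: $xyzx\cdot yzxy\cdot\rev{z}$ and $xyzx\cdot y\rev{z}xy$ cannot arise as divisors in this lemma. Their unique two-way variable $z$ would have to map to $x$ or $\rev{x}$, so the factor $xy$ of the fragment $xyzx$ would have to map onto an adjacency between the two one-way variables of $\phi$, i.e.\ a factor $yz$ or $zy$ of $\phi^\flat$ --- a case already handled by $xy\cdot xz\cdot yz\cdot\rev{y}$. The only long candidates you need are the $xyzyx\cdot zyxyz$-type formulas.
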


\begin{proof}
Let $\phi$ be as in the lemma statement, and suppose that $p_1$ and $p_2$ are factors of $\phi$ that flatten to $yxz$ and $zxy$, respectively.  Without loss of generality, assume that $p_1=yxz$.  First off, if $\phi_2^\flat$ contains a square, then $\phi$ is e-divisible by $xx$ or $x\rev{x}$.  If $\phi$ contains the factor $yz,$ then the formula $yxz\cdot yz\cdot \rev{x}$ e-divides $\phi$ through the inclusion map, and we note that $xy\cdot xz\cdot yz\cdot \rev{y}\in \Phi_3$ e-divides $yxz\cdot yz\cdot \rev{x}$.  A similar argument applies if $\phi$ contains the factor $zy.$  If a factor of $\phi$ flattens to $yxy,$ then $\phi$ is e-divisible by $xyx\cdot \rev{y}\in\Phi_3$.  The situation is similar if a factor of $\phi$ flattens to $zxz.$

From the previous paragraph, we may now assume that $\phi^\flat$ does not contain any factors from $\{xx,yy,zz,yz,zy,yxy,zxz\}.$  Note that in particular, the only letter that can precede or follow $y$ or $z$ in a factor of $\phi^\flat$ is $x$.  Moreover, the only letter that can follow $yx$ or precede $xy$ is $z$, and the only letter that can follow $zx$ or precede $xz$ is $y$.

If $z$ appears at most once in each fragment of $\phi$, then by the observations of the previous paragraph, we see that every factor of $\phi$ flattens to a factor of $xyxzxyx$. But then $\phi$ is unavoidable, as it divides $x^\sharp yx^\sharp zx^\sharp yx^\sharp$ through the inclusion map. So we may assume that $z$ appears twice in some fragment of $\phi$.  By a similar argument, we may assume that $y$ appears twice in some fragment of $\phi$.  But then $\phi$ must have factors $p_y$ and $p_z$ that flatten to $yxzxy$ and $zxyxz,$ respectively.  We verify that $p_y\cdot p_z$ is e-equivalent to some member of $\Phi_3$ for any particular instance of $p_y$ and $p_z$, and hence $\phi$ is e-divisible by the corresponding element of $\Phi_3$.
\end{proof}

Now that we know $\Phi_3$ is a $3$-avoidance basis for formulas with reversal, the remainder of the article is devoted to demonstrating that $xyzx\cdot \rev{y}\cdot\rev{z}$ is the unique element in $\Phi_3$ of highest avoidability index $5$.  Many of the elements in $\Phi_3$ were already known to be $4$-avoidable, and several others are easily proven to be avoided by some infinite periodic word on at most $4$ letters; see Table~\ref{Avoidance3}.  We demonstrate that the remaining formulas with reversal in $\Phi_3$ are $4$-avoidable in the next two sections.

\section{The $3$-minimal formulas that flatten to $xyzyx\cdot zyxyz$ are $4$-avoidable}\label{xyzyxSection}

The formula $xyzyx\cdot zyxyz$ was proven to be $2$-avoidable in \cite{CircularFormulas}, where it was shown that it does not occur in the image of any $\left(\tfrac{5}{4}^+\right)$-free word over $5$ letters under a particular $15$-uniform morphism $m_{15}$; see \cite{CircularFormulas} for details.  Unfortunately, the other members of $\Phi_3$ that flatten to $xyzyx\cdot zyxyz$ all occur in the $m_{15}$-image of some $\left(\tfrac{5}{4}^+\right)$-free word (some prefix of the infinite $\left(\tfrac{5}{4}^+\right)$-free word described by Moulin-Ollagnier~\cite{Moulin1992}, in fact).  While we still suspect that these formulas have avoidability index $2$, we demonstrate in this section that they are at least $4$-avoidable.

For the remainder of this article, let $g=01/2/031/3$.  This morphism was used in \cite{ClarkThesis}, where it was shown that $g^\omega(0)$ avoids the $3$-minimal formulas $xyzyx\cdot zyxyz$ and $xyzx\cdot yzxy\cdot zyz$.  Here, we show that $g^\omega(0)$ avoids the following formulas:
\begin{align*}
xyzyx & \cdot z\rev{y}x\rev{y}z\\
xyz\rev{y}x & \cdot zyx\rev{y}z\\
xyzyx & \cdot zyx\rev{y}z\\
xyz\rev{y}x & \cdot z\rev{y}xyz
\end{align*}
It follows that the reversal of the third formula listed above is $4$-avoidable as well.  In fact, it appears to be avoided by $g^\omega(0)$ as well, although we do not prove this fact.  While the formulas listed above all have similar structure in that they flatten to $xyzyx\cdot zyxyz,$ we have not found a unified proof that they are avoided by $g^\omega(0)$; we treat the first two formulas together, but have an individual proof for each of the remaining formulas.

We use many Lemmas from Section 2.5 of \cite{ClarkThesis} on the structure of $g^\omega(0),$ which are stated below for ease of reference.  However, we encourage the reader to familiarize themselves with the relevant material in \cite{ClarkThesis} before continuing.  For a morphism $f\colon A^*\rightarrow B^*$, we use the symbol $\vert$ to denote \emph{cuts} in a word of the form $f(W),$ where $W\in A^*$.  The \emph{blocks} of $f$ are the words $f(a)$ for all letters $a\in A$, and a cut indicates the end of one block and the beginning of another (see \cite{ClarkThesis} for a more formal definition).  For example, in the word $g^\omega(0)$, the factor $301$ must appear as $\vert 3\vert 01\vert.$  We also use vertical bars to indicate the length of words, but the meaning will be clear from context.

\begin{lemma}[Clark, \cite{ClarkThesis}] \ 
\label{Clark}
\begin{enumerate}
\item \label{Suffix} If $XS$ is a factor of $g^\omega(0)$ with $|X|\geq 4$ and $S\in\{1,31\}$, then every appearance of $X$ in $g^\omega(0)$ is followed by $S$.
\item \label{Prefix} If $PX$ is a factor of $g^\omega(0)$ with $|X|\geq 4$ and $P\in\{0,03\}$, then every appearance of $X$ in $g^\omega(0)$ is preceded by $P$.
\item \label{xx} $g^\omega(0)$ avoids the pattern $xx$.
\item \label{xyxdotyxy} $g^\omega(0)$ avoids the formula $xyx\cdot yxy$.
\item \label{X3X} $g^\omega(0)$ avoids the pattern with constants $x3x$.
\item \label{Ends} Suppose $X\vert Y\vert X$ is a factor of $g^\omega(0)$ for some words $X$ and $Y$.  Then $X\vert Y\vert X=\vert X\vert Y\vert X\vert.$
\item \label{Last} For all words $X,Y\in\{0,1,2,3\}^*$ and all letters $z\in\{0,1,2\},$ $g^\omega(0)$ does not contain both $XzY3Xz$ and $zY3XzY$.
\end{enumerate}
\end{lemma}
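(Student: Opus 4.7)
The plan is to treat this lemma, which is a compilation of structural results about the fixed point $g^\omega(0)$ attributed to Clark's thesis, by exploiting the strong synchronization of the morphism $g=01/2/031/3$. The preliminary step would be to prove a \emph{unique desubstitution} statement: every sufficiently long factor of $g^\omega(0)$ admits exactly one decomposition into blocks $g(0),g(1),g(2),g(3)$. This follows from a short case analysis on the positions in which each letter can occur inside each block (the symbol $2$ is isolated; $3$ occurs only as $g(3)$ or as the first letter of $g(2)$; $0$ only as the first letter of $g(0)$ or the middle of $g(2)$; $1$ only as the second letter of $g(0)$ or the last letter of $g(2)$), together with the observation that any window of length $4$ contains enough information to force at least one interior cut.

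With synchronization in hand, parts \ref{Suffix}, \ref{Prefix}, and \ref{Ends} follow essentially immediately. In \ref{Suffix} and \ref{Prefix}, the factor $X$ of length at least $4$ sits inside $g^\omega(0)$ at a uniquely determined offset modulo the block structure, so any letter immediately before or after it is forced by $g$, giving $P$ or $S$ uniquely. In \ref{Ends}, the hypothesis that the inner $Y$ is block-aligned combined with synchronization forces both outer copies of $X$ to begin and end at cuts as well.

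For the avoidance statements \ref{xx}, \ref{xyxdotyxy}, and \ref{X3X}, the plan is standard infinite descent. Short witnesses (a square, an instance of $xyx\cdot yxy$, or a factor of the form $u3u$) are dispatched by a direct computation on a sufficiently long prefix of $g^\omega(0)$. For any longer witness, synchronization forces the witness to be the $g$-image of a strictly shorter witness of the same shape inside $g^\omega(0)$, contradicting minimality. For \ref{X3X}, the key rigidity is that the separating constant $3$ must coincide with a full $g(3)$ block, so the two copies of the surrounding factor are aligned identically with respect to the block decomposition and hence reduce under desubstitution.

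The main obstacle will be part \ref{Last}, which simultaneously forbids the two configurations $XzY3Xz$ and $zY3XzY$. Here the plan is to assume both factors occur, use synchronization to locate the constant letter $3$ in each occurrence, and thereby pin down the positions of the two $X$'s and $Y$'s up to the block structure; one then argues that the two alignments cannot coexist without producing either a square or an $x3x$-factor elsewhere in $g^\omega(0)$, contradicting \ref{xx} or \ref{X3X}. This is the step requiring careful bookkeeping, and the exact argument is the one carried out by Clark in \cite{ClarkThesis}.
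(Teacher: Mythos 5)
The paper does not prove this lemma at all: it is imported verbatim from Section~2.5 of Clark's thesis and stated ``for ease of reference,'' so there is no in-paper proof to compare your plan against. Your overall strategy (unique desubstitution for long factors, then direct forcing for parts 1, 2, 6 and infinite descent for parts 3, 4, 5, 7) is the standard and correct way such facts about fixed points are established, so the architecture is sound.

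That said, several concrete steps of the plan are wrong or missing as written. (i) Your positional case analysis is garbled: in $g(2)=031$ the letter $0$ is the \emph{first} letter and $3$ is the \emph{second}, not as you state; the observation that actually drives synchronization is that $0$ and $2$ only ever begin blocks (so there is a cut before every $0$ and every $2$) while $1$ only ever ends blocks. (ii) For parts \ref{Suffix} and \ref{Prefix}, it is false that ``any letter immediately before or after'' a long factor is forced: if $X$ ends at a cut, the next block is not determined by $X$. The argument works only because $S\in\{1,31\}$ are precisely the proper suffixes of blocks (no block begins with $1$, and a block-initial $3$ cannot be followed by a block-initial $1$), so the hypothesis forces $X$ to end strictly inside a block whose completion is then read off from the unique parsing of $X$; dually $P\in\{0,03\}$ are precisely the proper prefixes of blocks. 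Your plan omits this, which is the entire content of these two parts. (iii) In part \ref{X3X}, your ``key rigidity'' that the separating $3$ must be a full $g(3)$ block is false as stated: $3$ also occurs as the interior letter of $g(2)=031$, in which case $u$ ends in $0$ and begins with $1$, and that alignment must be handled separately before desubstituting. (iv) For part \ref{Last}, which you yourself identify as the hard case, you give no argument and explicitly defer to Clark; as a proof of the lemma the proposal is therefore incomplete precisely where completeness matters most.
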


In many of the proofs in this section, we perform an exhaustive search of all small factors of $g^\omega(0)$ to eliminate certain possibilities.  For a fixed $\ell$, the set $S_\ell$ of all factors of $g^\omega(0)$ of length $\ell$ can be enumerated by the following algorithm:
\begin{enumerate}[label=\arabic*.]
\item Find the length $\ell$ prefix $P_\ell$ of $g^\omega(0)$.
\item Set $S_\ell=\{P_\ell\}$.
\item While the set $N=\{U\colon\ U \mbox{ is a factor of } g(V) \mbox{ for some } V\in S_\ell \mbox{ and } |U|=\ell\}$ is not contained in $S_\ell$, set $S_\ell=N$.
\end{enumerate}
The correctness of the algorithm is easily verified.  Every length $\ell$ factor of $g^\omega(0)$ must arise as a factor of the $g$-image of another length $\ell$ factor since $g$ is nonerasing, and the algorithm must terminate because $S_\ell$ is finite.  Once the set $S_\ell$ is obtained, it is straightforward to find the smallest value $n_\ell$ such that every factor of length $\ell$ appears in $g^{n_\ell}(0)$.  We completed these computations for small values of $\ell,$ and the results are summarized in Table \ref{IterationTable}.

\begin{table}[h]
\begin{center}
\begin{tabular}{c c}
$\ell$ & $n_\ell$\\\midrule
$1$ & $4$\\
$2$ & $6$\\
$[3,4]$ & $8$\\
$5$ & $9$\\
$[6,8]$ & $10$\\
$[9,12]$ & $11$\\
$[13,19]$ & $12$\\
$[20,30]$ & $13$\\
$[31,48]$ & $14$
\end{tabular}
\caption{The smallest value $n_\ell$ such that all factors of $g^\omega(0)$ of length $\ell$ appear in $g^{n_\ell}(0).$  Here $[a,b]$ denotes any integer between $a$ and $b$ inclusive.}
\label{IterationTable}
\end{center}
\end{table}

We first use an exhaustive search to determine the \emph{reversible factors} of $g^\omega(0)$.  A word $u$ is called a \emph{reversible factor} of a word $w$ if both $u$ and its reversal $\revant{u}$ are factors of $w$.

\begin{lemma}\label{ReversibleFactors}
The reversible factors of $g^\omega(0)$ are exactly the factors of $303$, $323$, $03130$, and $31013$.  The only nonpalindromic reversible factors are $30,$ $32,$ $31,$ $031,$ $0313,$ $10,$ $310,$ $3101,$ and their reversals.
\end{lemma}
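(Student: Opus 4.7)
The approach is a direct computational verification exploiting two facts. First, being a reversible factor is closed under taking subfactors: if $u$ and $\revant{u}$ both appear in $g^\omega(0)$, then for any factor $v$ of $u$, $\revant{v}$ is a factor of $\revant{u}$, hence of $g^\omega(0)$. Second, by Table~\ref{IterationTable}, every length-$\ell$ factor of $g^\omega(0)$ occurs in $g^{n_\ell}(0)$, which reduces questions about factors of bounded length to finite checks inside an explicit prefix of $g^\omega(0)$.

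The plan is to proceed in three steps. First, I would verify by direct inspection of $g^9(0)$ (which contains every factor of length at most $5$, since $n_5 = 9$) that each of $303$, $323$, $03130$, and $31013$ occurs as a factor of $g^\omega(0)$; each is a palindrome and hence trivially reversible. Second, I would apply the enumeration algorithm described above to produce the complete set $S_6$ of length-$6$ factors of $g^\omega(0)$ inside $g^{10}(0)$ (valid because $n_6 = 10$), and check that no $u \in S_6$ satisfies $\revant{u} \in S_6$. Combined with the closure property, this immediately forces every reversible factor to have length at most $5$. Third, I would enumerate the factors of length at most $5$ and pick out those whose reversal also appears; by inspection these coincide exactly with the factors of the four palindromes listed in the statement.

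The explicit nonpalindromic list is then obtained simply by writing down the non-palindromic factors of $\{303, 323, 03130, 31013\}$ up to reversal, yielding the eight representatives $30, 32, 31, 031, 0313, 10, 310, 3101$ (paired with their reversals). There is no real conceptual obstacle here: the proof is a finite enumeration whose correctness depends only on the completeness guarantees given by Table~\ref{IterationTable} and on the closure-under-factors property, which is precisely what justifies halting the search at length~$6$.
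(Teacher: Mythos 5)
Your proposal is correct and follows essentially the same route as the paper, which likewise reads $n_6=10$ from Table~\ref{IterationTable}, exhaustively searches $g^{10}(0)$ to find no reversible factors of length $6$, and concludes that all reversible factors have length at most $5$ before listing them. The only difference is that you make explicit the closure-under-subfactors observation that the paper leaves implicit when passing from ``no reversible factors of length $6$'' to ``no reversible factors of length $6$ or more.''
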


\begin{proof}
Reading from Table \ref{IterationTable}, all factors of $g^\omega(0)$ of length $6$ appear in $g^{10}(0)$.  By an exhaustive search of $g^{10}(0)$, we conclude that $g^\omega(0)$ has no reversible factors of length $6$ or more, and that the reversible factors of $g^\omega(0)$ are exactly those indicated in the first statement.  The second statement follows immediately.
\end{proof}

Note that since $g^\omega(0)$ has only finitely many reversible factors, we could in theory use Cassaigne's algorithm~\cite{CassaigneThesis} to show that $g^\omega(0)$ avoids the formulas that flatten to $xyzyx\cdot zyxyz$.  One would need to carry out Cassaigne's algorithm on a finite set of formulas with constants (one for each reversible factor).  However, since the proofs are feasible by hand, we write the proofs instead of implementing Cassaigne's algorithm.  The written proofs provide some nice insight into the structure of $g^\omega(0)$.

We will require the following straightforward corollary to Lemma~\ref{Clark} parts \ref{Suffix} and \ref{Prefix}.

\begin{corollary}
\label{Preimage}
Let $X$ be a word of length at least $9$.
\begin{enumerate}
\item \label{Suffix2} If $XS$ is a factor of $g^\omega(0)$ for some $S\in\{2,32\}$, then every appearance of $X$ in $g^\omega(0)$ is followed by $S$.  
\item \label{Prefix2} If $PX$ is a factor of $g^\omega(0)$ for some $P\in\{01,013\},$ then every appearance of $X$ in $g^\omega(0)$ is preceded by $P$.  
\end{enumerate}
\end{corollary}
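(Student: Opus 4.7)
The plan is to reduce each case to the parent Lemma \ref{Clark} parts \ref{Suffix} and \ref{Prefix} by passing to the preimage under $g$. The key observation is that $2 = g(1)$, $32 = g(31)$, $01 = g(0)$, and $013 = g(03)$, so each new suffix $S$ or prefix $P$ is exactly the $g$-image of an allowed suffix or prefix from the parent lemma. Thus the corollary will amount to ``the parent lemma, lifted one full $g$-block further on each side.''

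For part \ref{Suffix2}, I would suppose $X \cdot S$ is a factor with $|X| \geq 9$ and write $S = g(T)$ for $T \in \{1, 31\}$. Since $S$ consists of one or two complete $g$-blocks, $X$ must end at a block boundary. I would then parse $X$ intrinsically as $X = u \cdot g(V)$, where $u$ is a proper suffix of some $g$-block (one of $\varepsilon, 1, 31$) and $V$ is a word over $\{0,1,2,3\}$. This parsing is uniquely determined by $X$ alone: $u$ is forced by the first one or two letters of $X$ (since $1$ is never a block-start letter and $31$ is the suffix of only the block $031$), and then $V$ is the $g$-preimage of $X$ with its prefix $u$ removed, which is unique because the blocks $\{01, 2, 031, 3\}$ form a prefix code.

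The critical length step is to show $|V| \geq 4$. From $|X| \geq 9$ and $|u| \leq 2$, I get $|g(V)| \geq 7$, and a short enumeration of length-$3$ words $V$ over $\{0,1,2,3\}$ with $|g(V)| \geq 7$ yields only $\{202, 212, 020, 232\}$. The first three contain $02$ or $21$, both easily seen to be impossible 2-factors of $g^\omega(0)$ directly from the block structure (after $0$ the next letter is $1$ or $3$; after $2$ the next letter is a block start $0, 2,$ or $3$), and the fourth, $232$, parses as $g(131)$ whose preimage $131$ is likewise excluded (the factor $31$ forces $3$ to be the middle letter of $g(2) = 031$, hence preceded by $0$, not $1$). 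So $|V| \geq 4$. Now $X \cdot S = u \cdot g(V \cdot T)$, so $V \cdot T$ is a factor of $g^\omega(0)$; Lemma \ref{Clark} part \ref{Suffix} applied to $V$ with suffix $T$ gives that every occurrence of $V$ in $g^\omega(0)$ is followed by $T$. By uniqueness of the intrinsic parsing, every occurrence of $X$ in $g^\omega(0)$ extends on the right by the same block-aligned copy of $g(V)$, whose trailing block(s) must therefore spell out $g(T) = S$.

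Part \ref{Prefix2} would go through by the mirror-image argument: parse $X = g(V) \cdot v$ with $v \in \{\varepsilon, 0, 03\}$ determined by the last letters of $X$; the same length count yields $|V| \geq 4$; and apply Lemma \ref{Clark} part \ref{Prefix} to conclude. The main obstacle in both parts will be verifying the intrinsic-parsing claim and the $|V| \geq 4$ bound, both of which rest on the explicit list of forbidden short factors of $g^\omega(0)$.
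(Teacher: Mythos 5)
Your proposal is correct and follows essentially the same route as the paper: pass to the $g$-preimage of $X$ (well defined since $|X|\geq 9$ forces the block parsing of $X$ to be intrinsic) and apply Lemma~\ref{Clark} parts \ref{Suffix} and \ref{Prefix} to that preimage, the only difference being that you establish the preimage has length at least $4$ by a hand enumeration where the paper instead cites a computer verification that $g$-images of length-$4$ factors of $g^\omega(0)$ have length at most $9$. One cosmetic remark: your list of length-$3$ words $V$ with $|g(V)|\geq 7$ omits those containing $00$ or $22$ (e.g.\ $222$, $220$, $002$), but these are excluded anyway because $g^\omega(0)$ is square-free by Lemma~\ref{Clark}\ref{xx}, so the conclusion $|V|\geq 4$ stands.
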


\begin{proof}
For part \ref{Suffix2}, suppose that $XS$ is a factor of $g^\omega(0)$ with $|X|\geq 9$ and $S\in\{2,32\}$.  Note that we have $X|S$, and let $X_1$ denote the length $4$ suffix of any preimage of $X$ in $g$.  Note that $X_1$ is completely determined since $|X|\geq 9$, the image of every factor of length $4$ of $g^\omega(0)$ has length at most $9$ (verified directly by computer), and $g$ is injective.  Let $S_1$ denote the preimage of $S$ in $g$, so $S_1\in\{1,31\}.$

Now $X_1S_1$ is a factor of $g^\omega(0)$ as well, and by Lemma~\ref{Clark}\ref{Suffix}, every appearance of $X_1$ in $g^\omega(0)$ is followed by $S_1$.  So every appearance of the preimage of $X$ is followed by $S_1$, meaning that every appearance of $X$ is followed by $g(S_1)=S,$ as desired.

The proof of part~\ref{Prefix2} is analogous.
\end{proof}

We are now ready to prove the first main result of this section.

\begin{theorem}\label{gomega1}
The following formulas with reversal are avoided by $g^\omega(0)$:
\begin{center}
\begin{tabular}{l}
$xyzyx\cdot z\rev{y}x\rev{y}z$ \\
$xyz\rev{y}x\cdot zyx\rev{y}z$
\end{tabular}
\end{center}
\end{theorem}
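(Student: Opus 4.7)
The plan is to argue by contradiction. Suppose one of the two formulas occurs in $g^\omega(0)$ through a morphism $h$ respecting reversal, and set $X=h(x)$, $Y=h(y)$, $Z=h(z)$, all nonempty, so that $h(\rev{y})=\revant{Y}$. For the first formula this means that $XYZYX$ and $Z\revant{Y}X\revant{Y}Z$ are both factors of $g^\omega(0)$; for the second it means that $XYZ\revant{Y}X$ and $ZYX\revant{Y}Z$ are. In either case both $Y$ and $\revant{Y}$ occur, so $Y$ is a reversible factor of $g^\omega(0)$. By Lemma~\ref{ReversibleFactors}, either $Y$ is a palindrome or (up to reversal) $Y\in\{30,32,31,031,0313,10,310,3101\}$. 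In the palindromic case $\revant{Y}=Y$, so each formula collapses to an occurrence of the classical formula $xyzyx\cdot zyxyz$ through the same morphism; since $g^\omega(0)$ is already known to avoid $xyzyx\cdot zyxyz$ (as recalled at the start of this section), this case is finished.

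For the non-palindromic case I would treat each of the finitely many candidate values of $Y$ in turn. The common strategy is, for each candidate $Y$, to locate every occurrence of $Y$ and of $\revant{Y}$ inside a prefix of $g^\omega(0)$ long enough that Table~\ref{IterationTable} captures all of them, and then to use Corollary~\ref{Preimage} together with Lemma~\ref{Clark}\ref{Suffix} and Lemma~\ref{Clark}\ref{Prefix} to force a unique cut structure around each occurrence. Reading off the cuts inside the block $YZY$ (for the first formula) or $YZ\revant{Y}$ (for the second) pins $Z$ down to a small list of candidates, and the cuts inside $\revant{Y}X\revant{Y}$ or $YX\revant{Y}$ pin $X$ similarly. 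Each surviving triple $(X,Y,Z)$ is then eliminated by combining Lemma~\ref{Clark}\ref{Ends} (to impose a consistent cut picture on the two framing occurrences of $X$ or $Z$), Lemma~\ref{Clark}\ref{xx}, Lemma~\ref{Clark}\ref{xyxdotyxy} and Lemma~\ref{Clark}\ref{X3X} (to rule out forced squares, $xyx\cdot yxy$ patterns, or $x3x$ constants that a typical candidate produces), and Lemma~\ref{Clark}\ref{Last}, which directly forbids the kind of shifted co-occurrence that both formulas manufacture whenever $Y$ begins or ends with $3$. The two formulas are treated in parallel; they differ only in which of $X$ and $Z$ plays the ``framing'' role versus the ``enclosed'' role between $Y$ and $\revant{Y}$.

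The main obstacle will be the short candidates $Y\in\{10,30,31,32\}$, because Corollary~\ref{Preimage} requires $|X|\ge 9$ and so cannot be applied immediately to lift cut information from such a short factor. For these I would first extend the known factor $XYZYX$ (respectively $XYZ\revant{Y}X$) to its nearest cuts in $g^\omega(0)$, using the avoidance of $xx$ and $x3x$ to force $|X|$ to be moderately large, and only then invoke Corollary~\ref{Preimage}. For the longer candidates $0313$, $310$, $3101$ and their reversals, the enumeration collapses almost immediately once a single cut inside $Y$ is detected. In the worst case the remaining possibilities are finitely many and are discharged by direct inspection inside $g^{n_\ell}(0)$ for the appropriate $\ell$ from Table~\ref{IterationTable}; this is the bookkeeping-heavy portion of the argument, but should not involve ideas beyond those already encapsulated in Lemma~\ref{Clark}.
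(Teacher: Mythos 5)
Your opening reduction is exactly the paper's: if either formula occurred, $Y=f(y)$ would be a reversible factor of $g^\omega(0)$, the palindromic case collapses to the known avoidance of $xyzyx\cdot zyxyz$, and Lemma~\ref{ReversibleFactors} leaves finitely many nonpalindromic candidates for $Y$. After that, however, your plan misses the single observation that makes these two formulas easy, and replaces it with an enumeration that does not terminate as described. The decisive point is that in both $xyzyx\cdot z\rev{y}x\rev{y}z$ and $xyz\rev{y}x\cdot zyx\rev{y}z$, \emph{each} of $x$ and $z$ occurs adjacent to $y$ and to $\rev{y}$ on \emph{both} sides. Hence, writing $A$ for whichever of $f(x)$, $f(z)$ is long, all four of $AY$, $A\revant{Y}$, $YA$, $\revant{Y}A$ are factors of $g^\omega(0)$, and this alone is contradictory for every candidate $Y$: Lemma~\ref{Clark} parts \ref{Suffix} and \ref{Prefix} and Corollary~\ref{Preimage} show that $A$ cannot be followed by both $Y$ and $\revant{Y}$ when $Y\in\{31,32\}$ and cannot be preceded by both when $Y=03$; when $Y$ ends in $0$ the factor $YA$ forces $A$ to begin with $1$ or $31$, so that $\revant{Y}A$ begins with a non-factor such as $011$ or $0131$; and $Y=3101$ forces $A$ to end in $0$, so that $A\revant{Y}$ contains the square $0101$. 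No information about $X$ or $Z$ beyond a length lower bound on one of them is ever needed, and Lemma~\ref{Clark} parts \ref{Ends} and \ref{Last} play no role here (they are needed only for the harder formula $xyz\rev{y}x\cdot z\rev{y}xyz$ treated later).

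Two specific steps in your plan would fail. First, the claim that reading off cuts inside $YZY$ ``pins $Z$ down to a small list of candidates'' is unjustified: $Z$ and $X$ can be arbitrarily long factors of $g^\omega(0)$, so there is no finite list of ``surviving triples'' and no prefix $g^{n_\ell}(0)$ in which to discharge them. Second, your treatment of the short candidates $Y\in\{10,30,31,32\}$ rests on the assertion that avoidance of $xx$ and $x3x$ forces $|X|$ to be moderately large; it does not ($X$ may be a single letter). The correct reduction is the opposite one: if $|f(x)|\le 8$ and $|f(z)|\le 8$, then every fragment has image of length at most $32$, and a finite search of $g^{14}(0)$ (via Table~\ref{IterationTable}) disposes of that case, after which one may assume $|A|\ge 9$ for some $A\in\{f(x),f(z)\}$ and run the adjacency argument above on that $A$.
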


\begin{proof}
Let $\phi$ be any formula from the list in the theorem statement, and suppose towards a contradiction that $\phi$ occurs in $g^\omega(0)$ through morphism respecting reversal $f$.  Let $Y=f(y)$, and hence $\rev{Y}=f\left(\rev{y}\right)$, as $f$ respects reversal.  Since $\phi^\flat=xyzyx\cdot zyxyz$ is avoided by $g^\omega(0)$, we must have $Y\neq \rev{Y}$.  In other words, $Y$ is a nonpalindromic reversible factor of $g^\omega(0)$.  By Lemma \ref{ReversibleFactors}, $Y$ must be one of $30,$ $32,$ $31,$ $031,$ $0313,$ $10,$ $310,$ or $3101,$ or the reversal of one of these.  All of our arguments here are symmetric in $Y$ and $\rev{Y}$, so we assume without loss of generality that $Y\in \{03,32,31,130,3130,10,310,3101\}.$

Before we begin with case work, we note that if $|f(x)|\leq 8$ and $|f(z)|\leq 8$, then each fragment of $\phi$ has $f$-image of length at most $32$ (since $|Y|\leq 4$).  From Table \ref{IterationTable}, every factor of length at most $32$ of $g^\omega(0)$ appears in $g^{14}(0).$  We verify by exhaustive search that $\phi$ does not occur in $g^{14}(0)$, and hence we may now assume that $|f(a)|\geq 9$ for some variable $a$ in $\{x,z\}$.  Let $A=f(a)$.  Note that $\phi$ has the factors $ay,$ $a\rev{y},$ $ya,$ and $\rev{y}a$ (independent of whether $a=x$ or $a=z$), and hence $g^\omega(0)$ has factors $AY$, $A\rev{Y}$, $YA,$ and $\rev{Y}A$.  The assumption that $|A|\geq 9$ is used throughout.

\begin{case}
\item[$Y\in \{10,310,130,3130\}$] We have $YA=Y'\vert 0A$ for some $Y'\in\{1,31,13,313\}$, from which we conclude that $A$ must have prefix $1$ or $31.$  However, then $\rev{Y}A$ is not a factor of $g^\omega(0)$, a contradiction.  For example, if $Y=10$, then $\rev{Y}A=01A$, and $11$ and $131$ are not factors of $g^\omega(0).$  The other possibilities for $Y$ are eliminated similarly.

\item[$Y=3101$] We have $AY=A31\vert 01\vert$, meaning that $A$ must end in $0$.  However, then $A\rev{Y}=A1\vert 01 \vert 3$ is not a factor of $g^\omega(0)$, because $0101$ is not a factor of $g^\omega(0)$ by Lemma \ref{Clark}\ref{xx}.

\item[$Y=31$] Since the factor $AY=A31$ appears in $g^\omega(0)$, by Lemma \ref{Clark}\ref{Suffix}, every appearance of $A$ in $g^\omega(0)$ is followed by $31$.  But then $A\rev{Y}=A13$ cannot be a factor of $g^\omega(0)$, a contradiction.

\item[$Y=03$] Since the factor $YA=03A$ appears in $g^\omega(0)$, by Lemma \ref{Clark}\ref{Prefix}, every appearance of $A$ in $g^\omega(0)$ is preceded by $03$.  But then $\rev{Y}A=30A$ cannot be a factor of $g^\omega(0)$, a contradiction.

\item[$Y=32$] Since the factor $AY=A32$ appears in $g^\omega(0)$, by Corollary~\ref{Preimage}\ref{Suffix}, every appearance of $A$ in $g^\omega(0)$ is followed by $32$.  But then $A\rev{Y}=A23$ cannot be a factor of $g^\omega(0)$, a contradiction. \qedhere
\end{case}
\end{proof}

Next, we demonstrate that $xyzyx\cdot zyx\rev{y}z$ is avoided by $g^\omega(0)$.  It follows that the reversed formula $xyzyx\cdot z\rev{y}xyz$ is $4$-avoidable as well.  While many ideas from the proof of Theorem \ref{gomega1} are used in the proof that $xyzyx\cdot zyx\rev{y}z$ is avoided by $g^\omega(0)$, minor adjustments are required because the factors $z\rev{y}$ and $\rev{y}x$ are not present in $xyzyx\cdot zyx\rev{y}z$.

\begin{theorem}\label{gomega2}
The formula with reversal $xyzyx\cdot zyx\rev{y}z$ is avoided by $g^\omega(0)$.  
\end{theorem}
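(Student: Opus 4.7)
The plan is to adapt the proof of Theorem~\ref{gomega1} to the slightly less symmetric formula $\phi = xyzyx \cdot zyx\rev{y}z$. Suppose $\phi$ occurs in $g^\omega(0)$ through a morphism respecting reversal $f$, and set $X = f(x)$, $Y = f(y)$, $Z = f(z)$. Since the flattening $\phi^\flat = xyzyx \cdot zyxyz$ is avoided by $g^\omega(0)$, we must have $Y \neq \rev{Y}$, so $Y$ is one of the sixteen nonpalindromic reversible factors listed in Lemma~\ref{ReversibleFactors}.

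The occurrence supplies the factors $XY$, $YZ$, $ZY$, $YX$ (from $xyzyx$) together with $X\rev{Y}$ and $\rev{Y}Z$ (from $zyx\rev{y}z$). The key asymmetry relative to Theorem~\ref{gomega1} is the absence of $\rev{Y}X$ and $Z\rev{Y}$; what survives is that $X$ is followed by both $Y$ and $\rev{Y}$ and that $Z$ is preceded by both $Y$ and $\rev{Y}$, which is exactly what the suffix/prefix uniqueness machinery needs. First I would dispose of the short case $|X| \leq 8$ and $|Z| \leq 8$ by exhaustive search in $g^{14}(0)$, exactly as in Theorem~\ref{gomega1} (each fragment's image then has length at most $32$, so Table~\ref{IterationTable} applies). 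In the long case $|X| \geq 9$ or $|Z| \geq 9$ I would process the eight pairs $\{Y, \rev{Y}\}$ one at a time.

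For seven of the eight pairs, at least one of $Y$ or $\rev{Y}$ begins with a word in $\{1, 2, 31, 32\}$, so the ``$X$-side'' argument of Theorem~\ref{gomega1} applies: Lemma~\ref{Clark}\ref{Suffix} (or Corollary~\ref{Preimage}\ref{Suffix2} when $S \in \{2, 32\}$) forces a unique continuation of $X$, and the required factors $XY$ and $X\rev{Y}$ disagree on that continuation. For the remaining pair $\{30, 03\}$ the $X$-side fails, since neither first letter is forceable, so I would switch to the ``$Z$-side'': one of $YZ$ or $\rev{Y}Z$ has the form $\cdots 0 Z$, hence $0Z$ is a factor, and Lemma~\ref{Clark}\ref{Prefix} forces every occurrence of $Z$ to be preceded by $0$ as soon as $|Z| \geq 4$, contradicting $3Z$ coming from the other extension.

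The main obstacle is guaranteeing that the argument available for a given $Y$ has the corresponding side long enough in every residual sub-case. For the pair $\{30, 03\}$ the $Z$-side needs $|Z| \geq 4$; for the pair $\{32, 23\}$ the $X$-side relies on Corollary~\ref{Preimage}\ref{Suffix2} and hence on $|X| \geq 9$; and for the remaining pairs Lemma~\ref{Clark}\ref{Suffix} needs $|X| \geq 4$. Sub-cases such as $Y \in \{30, 03\}$ with $|X| \geq 9$ but $|Z| \leq 3$, or $Y \in \{32, 23\}$ with $|Z| \geq 9$ but $|X| \leq 8$, therefore escape both of the direct arguments above. To close each of them I would enumerate the small list of short possibilities for the constrained side and, for each, combine the required factors $X \cdot Y \cdot Z \cdot Y \cdot X$ and $Z \cdot Y \cdot X \cdot \rev{Y} \cdot Z$ with the block-cut structure of $g^\omega(0)$ (in particular that $0$ only begins a block, that $31$ appears only as the suffix of $g(2) = 031$, and that Lemmas~\ref{Clark}\ref{Ends} and \ref{Last} tightly constrain near-repetitions involving $X$) to rule out the remaining configurations.
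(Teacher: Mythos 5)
Your overall framework matches the paper's: restrict $Y$ to the nonpalindromic reversible factors of Lemma~\ref{ReversibleFactors}, kill the case $|X|,|Z|\leq 8$ by searching $g^{14}(0)$, and then run a case analysis on $Y$ using the factors $XY$, $X\rev{Y}$ (so $X$ is followed by both) and $YZ$, $\rev{Y}Z$ (so $Z$ is preceded by both). You have also correctly diagnosed exactly which configurations escape the direct suffix/prefix arguments. The problem is that those escaping configurations are where essentially all of the work in this theorem lives, and your plan for them --- ``enumerate the short possibilities for the constrained side and rule out the remaining configurations'' --- is not a proof, because in several of those configurations the \emph{other} side is unbounded ($|Z|\geq 9$ or $|X|\geq 9$), so after enumerating the short side you are still left with an infinite family of candidate occurrences. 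For instance, for $Y\in\{32,23\}$ with $|X|\leq 8$, the factor $YX\rev{Y}$ pins $X$ down to $01$ or $301$ (or $01203101$), but one must then still show that $0132Z$ and $0123Z$ cannot both be factors for an arbitrary long $Z$; the paper does this by taking preimages under $g$ twice and deriving the forbidden factor $21$. Similarly, $Y=31$ is resolved not by enumeration but by a chained deduction: $X$ ends in $0$ because $31$ occurs only as a suffix of the block $031$; hence $X\rev{Y}Z$ contains $013Z$, whose preimage forces $Z$ to begin with $2$; hence $YZ$ begins with $312$, which is not a factor. None of this is recoverable from ``block-cut structure'' invoked as a black box.

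A secondary point: your assignment of arguments to cases is less efficient than the paper's and manufactures more residual work than necessary. For the four pairs $\{10,01\}$, $\{310,013\}$, $\{130,031\}$, $\{3130,0313\}$ the paper does not use the $X$-side at all; it reuses Case~1 of Theorem~\ref{gomega1} with $A$ replaced by $Z$ ($YZ$ forces $Z$ to begin with $1$ or $31$, which is incompatible with $\rev{Y}Z$), and this works for every $Z$ except the single exception $Z=3$, which is killed by the factor $YZY=Y3Y$ and Lemma~\ref{Clark}\ref{X3X}. Likewise $Y=3101$ is handled for all $|X|\geq 1$ by exhibiting the square $0101$ inside $X\rev{Y}$. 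Routing these cases through Lemma~\ref{Clark}\ref{Suffix} as you propose needlessly creates the sub-cases $|X|\leq 3$, $|Z|\geq 9$, which you would then have to close by hand. In short: right skeleton, but the genuinely hard residual cases are asserted rather than proved.
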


\begin{proof}
Suppose that $\phi=xyzyx\cdot zyx\rev{y}z$ occurs in $g^\omega(0)$ through morphism respecting reversal $f$.  Let $X=f(x),$ $Y=f(y),$ and $Z=f(z)$.  By the same reasoning as in the proof of Theorem~\ref{gomega1}, $Y$ must be one of $30,$ $32,$ $31,$ $031,$ $0313,$ $10,$ $310,$ or $3101,$ or the reversal of one of these.  We may also assume that either $|X|\geq 9$ or $|Z|\geq 9$ by verifying that $\phi$ does not occur in $g^{14}(0)$.  In the case work completed below, we frequently use either the fact that $g^\omega(0)$ has factors $YZ$ and $\rev{Y}Z$, or the fact that $g^\omega(0)$ has factors $XY$ and $X\rev{Y}$.  

\begin{case}
\item[$Y\in\{10,310,130,3130\}$] By the argument used for Case 1 in the proof of Theorem \ref{gomega1} with $A$ replaced by $Z$, we are done if $Z$ has prefix $1$ or $31$.  The only remaining possibility is that $Z=3$ (as we do not necessarily have $|Z|>1$ here).  But in that case $YZY=Y3Y$ is not a factor of $g^\omega(0)$ by Lemma~\ref{Clark}\ref{X3X}, a contradiction.  A similar argument applies if $Y$ is the reversal of any word in $\{10,310,130,3130\}$ instead.

\item[$Y=3101$] The argument used in Case 2 in the proof of Theorem \ref{gomega1}  applies here, with $A$ replaced by $X.$  A symmetric argument works if $Y=1013.$

\item[$Y=31$] Since $XY=X31,$ we conclude that $X$ ends in $0$.  Then consider the factor $X\rev{Y}Z=X13Z=X'013Z.$  Taking the preimage of $013$ in $g$, we obtain $03$, which must be followed by $1$.  So $Z$ begins with $g(1)=2$.  But then $YZ$ has prefix $312,$ and this factor never appears in $g^\omega(0)$ (verified computationally).  By swapping $Y$ and $\rev{Y}$ in this argument, we rule out the case $Y=13$.

\item[$Y=03$] If $|Z|\geq 4$, then by Lemma \ref{Clark}\ref{Prefix}, since $YZ=03Z$ appears in $g^\omega(0)$, every appearance of $Z$ is preceded by $03$.  But then $\rev{Y}Z=30Z$ does not appear in $g^\omega(0)$, a contradiction.  So we may now assume that $|Z|\leq 3$.  From the factor $YZ=03Z,$ we see that $Z=1Z'$ for some word $Z'$ with $|Z'|\leq 2$.  Now consider the factor $YZY=\vert 031\vert Z'\vert 03,$ which has preimage $2W2$ in $g$, where $g(W)=Z'$.  Since $|Z'|\leq 2,$ we must have $W\in\{\varepsilon,0,1,3,13,31\},$ and we verify computationally that $2W2$ is not a factor of $g^\omega(0)$ in each of these cases.

\item[$Y=30$] By an argument symmetric to the one used in the previous case, we may assume that $|Z|\leq 3$.  Since $\rev{Y}Z=03Z$ appears in $g^\omega(0)$, $Z$ begins with $1$, and since $ZY=Z30$ appears in $g^\omega(0),$ $Z$ ends with either $1$ or $2$.  By examining all small factors of $g^\omega(0)$, we find that $Z\in\{1,12,101,132\}.$  But in each case, either $YZY$ or $\rev{Y}Z$ is not a factor of $g^\omega(0)$ (verified computationally), a contradiction.

\item[$Y=32$] If $|X|\geq 9$, then the argument of Case 5 in the proof of Theorem \ref{gomega1} applies with $A$ replaced by $X$, so we may assume that $|X|\leq 8$ and $|Z|\geq 9$.  Consider the factor $YX\rev{Y}=32X23.$  By considering the list of factors of $g^\omega(0)$ of length at most $12$, the only possibilities for $X$ are $01$ and $301$.  We show that the factors $XYZ$ and $X\rev{Y}Z$ cannot both appear in $g^\omega(0)$ in either case, a contradiction.  Whether $X=01$ or $X=301$, $XYZ$ and $X\rev{Y}Z$ have suffixes
\[
0132Z \mbox{ and } 0123Z.
\]
If these factors lie in $g^n(0)$, then taking preimages in $g$ twice, the factors $2a$ and $03a$ lie in $g^{n-2}(0)$ for some letter $a$, where $g^2(a)$ is a prefix of $Z$ ($Z$ is long enough to determine $a$ completely).  The factor $03a$ forces $a=1$, but the factor $21$ never appears in $g^\omega(0)$.

\item[$Y=23$] By similar reasoning to the previous case, we may assume that $|X|\leq 8$.  Consider the factor $YX\rev{Y}=23x32.$  By considering the list of factors of $g^\omega(0)$ of length at most $12$, we find that $X=01203101$ is the only possibility.  However, since $X$ ends in $01$, the argument used in the previous case demonstrates once again that the factors $XYZ$ and $X\rev{Y}Z$ cannot both appear in $g^\omega(0)$, a contradiction.\qedhere
\end{case}
\end{proof}

It is slightly trickier to show that the formula $xyz\rev{y}x\cdot z\rev{y}xyz$ is avoided by $g^\omega(0)$.  In this formula, every appearance of $y$ is preceded by $x$ and followed by $z$, and every appearance of $\rev{y}$ is preceded by $z$ and followed by $x$, meaning that very few of the arguments used in the proofs of Theorem \ref{gomega1} and Theorem \ref{gomega2} apply.  We first prove two useful lemmas.

\begin{lemma}\label{xz3xdotz3xz}
The word $g^\omega(0)$ avoids the formula with constants $xz3x\cdot z3xz.$
\end{lemma}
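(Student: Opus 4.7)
The plan is to argue by contradiction, reducing the conclusion to Lemma~\ref{Clark}\ref{Last}. Assume that $X$ and $Z$ are words such that both $XZ3X$ and $Z3XZ$ occur as factors of $g^\omega(0)$.

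First I would dispose of degenerate configurations. If $X=\varepsilon$ or $Z=\varepsilon$, then a factor of the form $W3W$ appears, violating the $x3x$ avoidance of Lemma~\ref{Clark}\ref{X3X}. Next, $Z$ cannot begin with the letter $3$: otherwise $33$ would occur in $g^\omega(0)$, but each $3$ in $g^\omega(0)$ is either the standalone block $g(3)$, which is always followed by a block starting with $0$ (since $g(0)=01$ and $g(2)=031$), or is the middle letter of $g(2)=031$, which is followed by $1$. So $33$ is impossible, and the first letter $z$ of $Z$ lies in $\{0,1,2\}$. Write $Z=zY$.

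The heart of the proof is to show that the occurrence of $X$ at the end of $XZ3X$ is followed in $g^\omega(0)$ by the letter $z$. Once this is established, $g^\omega(0)$ contains both $XzY3Xz$ and $zY3XzY$, directly contradicting Lemma~\ref{Clark}\ref{Last} applied with the single letter $z\in\{0,1,2\}$. To prove the following-letter claim, I would exploit the block structure of $g$. Since each occurrence of $3$ forces the nearby cuts into one of two rigid configurations (a standalone $g(3)$ block, or the middle position of a $g(2)=031$ block), the cuts in a neighbourhood of the $3$ in the factor $Z3X$ are pinned. Combined with Lemma~\ref{Clark}\ref{Ends} applied to the double occurrence of $Z$ in $Z3XZ$ (and to the double occurrence of $X$ in $XZ3X$), this propagates a uniquely determined cut decomposition across $Z3X$. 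The factor $Z3X$ appears as a common infix of both $XZ3X$ and $Z3XZ$, so its cut decomposition is the same in both occurrences; in $Z3XZ$ the letter immediately following $X$ is $z$, forcing the letter following $X$ in $XZ3X$ to be $z$ as well.

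The main obstacle will be the small-case configurations in which $|X|$ or $|Z|$ is below the thresholds required to invoke Corollary~\ref{Preimage}, so that the cut-propagation argument does not uniquely pin down the alignment. I would dispatch these by direct exhaustive search in $g^{n_\ell}(0)$ for an $\ell$ chosen from Table~\ref{IterationTable} that accommodates all short candidate pairs $(X,Z)$, verifying that no factor configuration $XZ3X,\, Z3XZ$ appears.
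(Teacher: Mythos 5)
Your reduction to Lemma~\ref{Clark}\ref{Last} is attractive, but the step you call the heart of the proof --- that the final $X$ in $XZ3X$ must be followed by $z$, the first letter of $Z$ --- is not established by your argument, and in one of the cases it cannot be established by the tools you invoke. Knowing that the two occurrences of $Z3X$ (as a suffix of $XZ3X$ and as a prefix of $Z3XZ$) carry the same internal cut decomposition says nothing about the letter that follows them: that letter is the first letter of the \emph{next} $g$-block, which is determined by the next letter of the preimage, not by the factor itself. The synchronization results of the paper only force a \emph{following} letter when that letter is $1$ or $2$ (Lemma~\ref{Clark}\ref{Suffix} with $S\in\{1,31\}$, Corollary~\ref{Preimage}\ref{Suffix2} with $S\in\{2,32\}$); the letter $0$ synchronizes only in the \emph{preceding} direction (Lemma~\ref{Clark}\ref{Prefix}). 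Indeed both $20$ and $23$ are factors of $g^\omega(0)$, and in general a factor followed by $0$ somewhere need not be followed by $0$ everywhere. So when $Z$ begins with $0$ your plan stalls. A second, smaller gap: your exclusion of a leading $3$ is only valid for $|Z|=1$; if $Z=3W$ with $W$ nonempty, neither $X3W3X$ nor $3W3X3W$ contains $33$, so the case $z=3$ (where Lemma~\ref{Clark}\ref{Last} does not apply at all) is not actually ruled out.

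For comparison, the paper does not try to append a letter after the final $X$. It runs a descent: assuming $xz3x\cdot z3xz$ occurs in $g^m(0)$ with $m$ minimal, it pins \emph{all} the cuts to obtain $\vert X\vert Z\vert 3\vert X\vert$ and $\vert Z\vert 3\vert X\vert Z\vert$ (using Lemma~\ref{Clark} parts \ref{Suffix}, \ref{Prefix} and \ref{Ends}, with the two possible misalignments at the $XZ$ junction dispatched via the avoidance of $xyx\cdot yxy$ and via Lemma~\ref{Clark}\ref{Last}, respectively --- in the latter case the extra letter $0$ is already sitting inside $X$ or $Z$, so no ``followed by $0$'' forcing is needed), and then takes $g$-preimages to contradict minimality. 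Some mechanism of this kind --- descent to the preimage, or an asymmetric treatment that uses ``preceded by'' rather than ``followed by'' when $z\in\{0,3\}$ --- is needed to close your argument; as written, the proposal has a genuine gap.
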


\begin{proof}
Suppose towards a contradiction that $xz3x\cdot z3xz$ occurs in $g^m(0)$, and that $m$ is minimal with respect to this property.  Moverover, let $xz3x\cdot z3xz$ occur in $g^m(0)$ through nonerasing morphism $f,$ and let $X=f(x)$ and $Z=f(z)$.  Note that if $XZ$ has length at most $4$ then $XZ3X$ and $Z3XZ$ each have length at most $8$.  The prefix $g^{10}(0)$ contains all factors of $g^\omega(0)$ of length at most $8$ (see Table~\ref{IterationTable}).  We verify by exhaustive search that $g^{10}(0)$ avoids $xz3x\cdot z3xz,$ and we may now assume that $XZ$ has length at least $5$.

First we show that the factors $XZ3X$ and $Z3XZ$ must parse as $XZ\vert 3\vert X$ and $Z\vert 3\vert XZ.$  Otherwise, we would have $Z=Z'0$ and $X=1X'$, giving $XZ3X=1X'Z'031X'$ and $Z3XZ=Z'031X'Z'0.$  By Lemma~\ref{Clark}\ref{Suffix}, every appearance of $XZ=1X'Z'0$ must be followed by $31$ (since $1X'Z'$ has length at least $4$ by the assumption that $|XZ|\geq 5$), so $Z'\vert 031\vert X'Z'\vert 0$ appears internally as $Z'031X'Z'031.$  However, then $xyx\cdot yxy$ occurs in $g^m(0)$ as indicated below:
\[
\underbrace{X'}_{\overset{\upmapsto}{x}}
\underbrace{Z'031}_{\overset{\upmapsto}{y}}
\underbrace{X'}_{\overset{\upmapsto}{x}}
\mbox{ and }
\underbrace{Z'031}_{\overset{\upmapsto}{y}}
\underbrace{X'}_{\overset{\upmapsto}{x}}
\underbrace{Z'031}_{\overset{\upmapsto}{y}}.
\]

We may now assume that we have factors $XZ\vert 3\vert X$ and $Z\vert 3\vert XZ$.  We show next that we must have $X\vert Z$.  If this is not the case then we have $XZ=X'0L1Z',$ where $L\in\{\varepsilon,3\}.$ Moreover, $X'0$ is a prefix of $X$ and $1Z'$ is a suffix of $Z$.  So $XZ3X$ and $Z3XZ$ contain the factors
\[
X'0L1Z'3X'0 
\mbox{ and }
1Z'3X'0L1Z'.
\]
By Lemma~\ref{Clark}\ref{Prefix}, every appearance of $1Z'3X'0$ is preceded by $0L,$ meaning that the latter factor appears internally as $0L1Z'3X'0L1Z'.$  However, then Lemma~\ref{Clark}\ref{Last} is contradicted as follows:
\[
\underbrace{X'}_{\overset{\upmapsto}{X}}
\underbrace{0}_{\overset{\upmapsto}{z}}
\underbrace{L1Z'}_{\overset{\upmapsto}{Y}}
3
\underbrace{X'}_{\overset{\upmapsto}{X}}
\underbrace{0}_{\overset{\upmapsto}{z}}
\mbox{ and }
\underbrace{0}_{\overset{\upmapsto}{z}}
\underbrace{L1Z'}_{\overset{\upmapsto}{Y}}
3
\underbrace{X'}_{\overset{\upmapsto}{X}}
\underbrace{0}_{\overset{\upmapsto}{z}}
\underbrace{L1Z'}_{\overset{\upmapsto}{Y}}.
\]

So we have factors $X\vert Z\vert 3\vert X$ and $Z\vert 3\vert X\vert Z$.  By Lemma~\ref{Clark}\ref{Ends}, we actually have factors $\vert X\vert Z\vert 3\vert X\vert$ and $\vert Z\vert 3\vert X\vert Z\vert$.  However, the preimages of these factors appear in $g^{m-1}(0)$, and contradict the minimality of $m$.
\end{proof}

\begin{lemma}\label{x2z3xdotz3x2z}
The word $g^\omega(0)$ avoids the formula with constants $x2z3x\cdot z3x2z.$
\end{lemma}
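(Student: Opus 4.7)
My plan is to reduce this statement directly to Lemma~\ref{xz3xdotz3xz}, rather than recapitulate its intricate parsing argument. The key observation is that the extra constant $2$ in the current formula can be absorbed into one of the variables of the previous formula.

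I will suppose for contradiction that $x2z3x\cdot z3x2z$ occurs in $g^\omega(0)$ through a non-erasing morphism $f$, and set $X=f(x)$ and $Z=f(z)$. For sufficiently small $|X|+|Z|$ the two factors $X2Z3X$ and $Z3X2Z$ have bounded length, so by Table~\ref{IterationTable} they both appear in some early iterate $g^N(0)$ and can be ruled out by an exhaustive computer search; concretely, taking $|X|+|Z|\le 7$ as the small case forces each factor to have length at most $15$, so it suffices to check $g^{12}(0)$. Hence I may assume $|X|+|Z|\ge 8$, which gives $|Z3X|\ge 9$, enough to invoke Corollary~\ref{Preimage}\ref{Suffix2}.

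Since $Z3X$ is followed by $2$ in $g^\omega(0)$ (it forms a prefix of the second fragment $Z3X2Z$), the corollary tells me that \emph{every} occurrence of $Z3X$ in $g^\omega(0)$ is followed by $2$. In particular, the trailing $X$ in the first fragment $X2Z3X$ is followed by $2$, so $X2Z3X2$ is a factor of $g^\omega(0)$. I then define a non-erasing morphism $f'\colon\{x,z\}^*\rightarrow\{0,1,2,3\}^*$ by $f'(x)=X2$ and $f'(z)=Z$, and observe that $f'(xz3x)=X2\cdot Z\cdot 3\cdot X2=X2Z3X2$ and $f'(z3xz)=Z\cdot 3\cdot X2\cdot Z=Z3X2Z$ are both factors of $g^\omega(0)$. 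Hence $xz3x\cdot z3xz$ occurs in $g^\omega(0)$ through $f'$, contradicting Lemma~\ref{xz3xdotz3xz}.

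The only real obstacle is cleanly discharging the small-case exhaustive search and pinning down the precise threshold above which Corollary~\ref{Preimage}\ref{Suffix2} may be invoked; both of these are routine given Table~\ref{IterationTable}. Once past the threshold $|X|+|Z|\ge 8$, the reduction to the previous lemma is immediate, and no further structural analysis of the block parsing is required.
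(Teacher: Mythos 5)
Your proposal is correct and follows essentially the same route as the paper: rule out the small cases by exhaustive search of $g^{12}(0)$, then use Corollary~\ref{Preimage}\ref{Suffix2} to append the trailing $2$ to $X$ and reduce to Lemma~\ref{xz3xdotz3xz} via $x\mapsto X2$, $z\mapsto Z$. No gaps.
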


\begin{proof}
Suppose towards a contradiction that $x2z3x\cdot z3x2z$ occurs in $g^\omega(0)$ through morphism $f$, and let $X=f(x)$ and $Z=f(z)$.  Then the factors $X2Z3X$ and $Z3X2Z$ appear in $g^\omega(0)$.  We may assume that $|Z3X|\geq 9$, since otherwise these factors each have length at most $15$.  All factors of $g^\omega(0)$ of length at most $15$ appear in $g^{12}(0),$ and we have verified by exhaustive search that the given formula does not occur in $g^{12}(0).$

Assuming now that $|Z3X|\geq 9,$ by Corollary~\ref{Preimage}\ref{Suffix2}, every appearance of $Z3X$ is followed by $2$, meaning that $X2Z3X$ and $Z3X2Z$ appear internally as
\[
\underbrace{X2}_{\overset{\upmapsto}{x}}
\underbrace{Z}_{\overset{\upmapsto}{z}}
3
\underbrace{X2}_{\overset{\upmapsto}{x}}
\mbox{ and }
\underbrace{Z}_{\overset{\upmapsto}{z}}
3
\underbrace{X2}_{\overset{\upmapsto}{x}}
\underbrace{Z}_{\overset{\upmapsto}{z}},
\]
which contradicts Lemma~\ref{xz3xdotz3xz} as indicated.
\end{proof}

Now we are ready to show the last main result of this section.

\begin{theorem}\label{gomega3}
The formula with reversal $xyz\rev{y}x\cdot z\rev{y}xyz$ is avoided by $g^\omega(0)$.
\end{theorem}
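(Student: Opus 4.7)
The plan is to follow the structure of Theorems~\ref{gomega1} and~\ref{gomega2}. Suppose $\phi = xyz\rev{y}x\cdot z\rev{y}xyz$ occurs in $g^\omega(0)$ through a morphism $f$ respecting reversal, and set $X=f(x)$, $Y=f(y)$, $Z=f(z)$; then $F_1 := XYZ\rev{Y}X$ and $F_2 := Z\rev{Y}XYZ$ are factors of $g^\omega(0)$. If $Y$ is a palindrome then $Y = \rev Y$, so $\phi^\flat = xyzyx\cdot zyxyz$ occurs through $f$, contradicting the result of~\cite{ClarkThesis} that $g^\omega(0)$ avoids $xyzyx\cdot zyxyz$. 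Hence $Y$ is nonpalindromic and, by Lemma~\ref{ReversibleFactors}, lies in the sixteen-word set $\{03,30,32,23,31,13,031,130,0313,3130,10,01,310,013,3101,1013\}$.

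The crucial new feature, and the central difficulty, is that no variable of $\phi$ is adjacent to both $y$ and $\rev{y}$: every $y$ sits between $x$ and $z$ while every $\rev{y}$ sits between $z$ and $x$. In particular, swapping $Y$ with $\rev Y$ is not a symmetry of $\phi$ (so we cannot halve the above list as in Theorem~\ref{gomega1}), and the unique-continuation arguments of Theorems~\ref{gomega1} and~\ref{gomega2}, which compared $XY$ with $X\rev{Y}$ or $YZ$ with $\rev{Y}Z$, are not directly available. Before tackling this, dispose of the small range: if $|X|\le 8$ and $|Z|\le 8$, then $|F_1|,|F_2|\le 32$, and by Table~\ref{IterationTable} every factor of $g^\omega(0)$ of length at most $32$ appears in $g^{14}(0)$, so an exhaustive computer search handles those instances. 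We may therefore assume $\max(|X|,|Z|)\ge 9$, unlocking Corollary~\ref{Preimage}.

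To replace the missing unique-continuation step, the plan is to use the factors $F_1$ and $F_2$ jointly, reducing each case to the constant-formulas of Lemmas~\ref{xz3xdotz3xz} and~\ref{x2z3xdotz3x2z}. For each $Y$ containing the letter $3$ (every value in the list except $10$ and $01$), split $Y$ around an occurrence of $3$ and choose auxiliary words $X',Z'$ absorbing the material on either side of that $3$ so that $F_1$ becomes an exact instance $X'Z'3X'$ of the first fragment of $xz3x\cdot z3xz$. Then use Lemma~\ref{Clark} parts~\ref{Suffix} and~\ref{Prefix} together with Corollary~\ref{Preimage} to argue that the occurrence of $F_2$ in $g^\omega(0)$ must be preceded and followed by forced characters, extending $F_2$ to the required second-fragment instance $Z'3X'Z'$. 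This contradicts Lemma~\ref{xz3xdotz3xz}. In the cases where the extension is off by an extra $2$ on one side, appeal instead to Lemma~\ref{x2z3xdotz3x2z}.

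The two cases $Y\in\{10,01\}$ contain no $3$ and are expected to be the hardest, since the constant-reduction above is unavailable. Here the strategy is a direct analysis of the $g$-block parse. Since $01$ occurs in $g^\omega(0)$ only as a whole block $g(0)$, the factors $\rev YX=01X$ and $Z\rev Y=Z01$ force $X$ to begin and $Z$ to end at block boundaries; the factors $X10$ and $10Z$ force $X$ to end at the first letter of a $g(0)$-block and $Z$ to start mid-block. Iterating these constraints under the preimage of $g$ should force a square, contradicting Lemma~\ref{Clark}\ref{xx}, or a configuration forbidden by Lemma~\ref{Clark}\ref{Last}. The main technical hurdle throughout is choosing, for each $Y$ in the first family, the split around a $3$ that makes $F_1$ an exact instance of $x'z'3x'$ and then showing via Corollary~\ref{Preimage} that $F_2$ promotes to the matching $Z'3X'Z'$.
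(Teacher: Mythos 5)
Your skeleton is the paper's: assume an occurrence through $f$, note $Y=f(y)$ must be a nonpalindromic reversible factor of $g^\omega(0)$ (Lemma~\ref{ReversibleFactors}), dispose of short images by an exhaustive search using Table~\ref{IterationTable}, then do a case analysis on $Y$ in which the forced one-sided extensions (Lemma~\ref{Clark} parts \ref{Suffix}--\ref{Prefix}, Corollary~\ref{Preimage}) promote the two factors to an occurrence of a formula already known to be avoided. Two points, though. First, your claim that ``swapping $Y$ with $\rev{Y}$ is not a symmetry of $\phi$'' is wrong: the substitution $x\mapsto z$, $z\mapsto x$, $y\mapsto\rev{y}$ maps each fragment of $xyz\rev{y}x\cdot z\rev{y}xyz$ to the other, so the candidate list for $Y$ can be halved exactly as in Theorem~\ref{gomega1}; the paper uses precisely this to work with eight cases.

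Second, and this is the genuine gap: your uniform plan of reducing every case in which $Y$ contains a $3$ to the constant formulas of Lemmas~\ref{xz3xdotz3xz} and~\ref{x2z3xdotz3x2z} does not go through for $Y\in\{310,3101,3130\}$ (or their reversals). Take $Y=310$: to write $F_1=X310Z013X$ as an instance $X'Z'3X'$ of $xz3x$ you are forced (taking the $3$ of $\rev{Y}=013$ and $X'=X$) into $Z'=310Z01$, and the matching second fragment $Z'3X'Z'=310Z013X310Z01$ then requires the occurrence of $F_2=Z013X310Z$ to be preceded by $310$ and followed by $01$. The extension lemmas can only force suffixes from $\{1,31,2,32\}$ and prefixes from $\{0,03,01,013\}$, so neither ``followed by $01$'' nor ``preceded by $310$'' is obtainable, and the promotion of $F_2$ --- which you yourself flag as the main technical hurdle --- fails. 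The paper's proof escapes this by varying the target formula with the case: $Y\in\{10,310,3101\}$ are reduced to $xyx\cdot yxy$ (so your ``hardest'' case $Y=10$ is in fact one of the cleanest), $Y=3130$ is reduced to $xyzyx\cdot zyxyz$ after writing $X=X'0$, and only $Y\in\{32,03,31,031\}$ go to the constant formulas, with $Y=031$ needing the $2$-padded version $x2z3x\cdot z3x2z$ after a descent through $g$. As written, your proposal is a plan whose central reduction step breaks in roughly half the cases, and the case $Y\in\{10,01\}$ is only a conjecture that a block-parse analysis ``should force a square.''
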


\begin{proof}
Let $\phi=xyz\rev{y}x\cdot z\rev{y}xyz$, and suppose that $\phi$ occurs in $g^\omega(0)$ through morphism respecting reversal $f$.  Let $X=f(x),$ $Y=f(y),$ and $Z=f(z).$  By Lemma~\ref{ReversibleFactors} and the symmetry of $y$ and $\rev{y}$ in $\phi$ (by swapping $x$ and $z$), we may assume that $Y$ is in $\{03,32,31,031,3130,10,310,3101\}$.  We examine each case separately.  We may assume that $|X|+|Z|\geq 7,$ as otherwise the $f$-image of each fragment of $\phi$ has length at most $19$, and we have verified computationally that $g^{12}(0)$ avoids $\phi.$  Our general technique is to use Lemma~\ref{Clark} parts \ref{Suffix} and \ref{Prefix} and Corollary~\ref{Preimage} parts \ref{Suffix2} and \ref{Prefix2} to extend the factors $XYZ\rev{Y}X$ and $Z\rev{Y}XYZ$ to words that constitute an occurrence of a formula that we know is avoided by $g^\omega(0)$.  The assumption $|X|+|Z|\geq 7$, along with the fact that $|Y|\geq 2,$ ensures that $XYZ$ and $Z\rev{Y}X$ have length at least $9$, allowing each of the aforementioned lemmas to be applied.

\begin{case}
\item[$Y=10$]
In this case, the factors $X10Z01X$ and $Z01X10Z$ appear in $g^\omega(0).$  By Lemma~\ref{Clark}\ref{Suffix}, every appearance of $Z01X$ is followed by $1$, while by Lemma~\ref{Clark}\ref{Prefix}, every appearance of $Z01X$ is preceded by $0$. Finally, by Corollary~\ref{Preimage}\ref{Prefix2}, every appearance of $X10Z$ is followed by $01$.  Therefore, the factors appear internally as
\[
\underbrace{X1}_{\overset{\upmapsto}{x}}
\underbrace{0Z01}_{\overset{\upmapsto}{y}}
\underbrace{X1}_{\overset{\upmapsto}{x}}
\mbox{ and }
\underbrace{0Z01}_{\overset{\upmapsto}{y}}
\underbrace{X1}_{\overset{\upmapsto}{x}}
\underbrace{0Z01}_{\overset{\upmapsto}{y}},
\]
which contradicts Lemma~\ref{Clark}\ref{xyxdotyxy} as indicated.

\item[$Y=310$] 
In this case, the factors $X310Z013X$ and $Z013X310Z$ appear in $g^\omega(0)$.  By Lemma~\ref{Clark}\ref{Prefix}, every appearance of $Z013X$ is preceded by $0$, and by Lemma~\ref{Clark}\ref{Suffix}, every appearance of $Z013X$ is followed by $31$.  Finally, by Corollary~\ref{Preimage}\ref{Prefix2}, every appearance of $X310Z$ is preceded by $013$.  Therefore, the factors appear internally as
\[
\underbrace{X31}_{\overset{\upmapsto}{x}}
\underbrace{0Z013}_{\overset{\upmapsto}{y}}
\underbrace{X31}_{\overset{\upmapsto}{x}}
\mbox{ and }
\underbrace{0Z013}_{\overset{\upmapsto}{y}}
\underbrace{X31}_{\overset{\upmapsto}{x}}
\underbrace{0Z013}_{\overset{\upmapsto}{y}},
\]
which contradicts Lemma~\ref{Clark}\ref{xyxdotyxy} as indicated.

\item[$Y=3130$]
In this case, the factors $X3130Z0313X$ and $Z0313X3130Z$ appear in $g^\omega(0)$.  Note that $X=X'0$ for some word $X'$ since the factor $X31$ is present.  Note that $X'\neq \emptyset$ (if it was then $0313X313=03130313$ is a square, contradicting Lemma~\ref{Clark}\ref{xx}).  Now, substituting $X=X'0$ and applying Lemma~\ref{Clark}\ref{Prefix} to conclude that every appearance of $Z0313$ is preceded by $0$, the factors above appear internally as
\[
\underbrace{X'}_{\overset{\upmapsto}{x}}
\underbrace{031 3}_{\overset{\upmapsto}{y}} 
\underbrace{0Z}_{\overset{\upmapsto}{z}}
\underbrace{0313}_{\overset{\upmapsto}{y}} 
\underbrace{X'}_{\overset{\upmapsto}{x}} 0
\mbox{ and } 
\underbrace{0Z}_{\overset{\upmapsto}{z}} 
\underbrace{031 3}_{\overset{\upmapsto}{y}} 
\underbrace{X'}_{\overset{\upmapsto}{x}}
\underbrace{031 3}_{\overset{\upmapsto}{y}} 
\underbrace{0Z}_{\overset{\upmapsto}{z}}.
\]
Thus $xyzyx\cdot zyxyz$ occurs in $g^\omega(0)$ as indicated, a contradiction.

\item[$Y=3101$]
Using Lemma~\ref{Clark} parts \ref{Suffix} and \ref{Prefix}, and Corollary~\ref{Preimage} parts \ref{Suffix2} and \ref{Prefix2}, the factors $X3101Z1013X$ and $Z1013X 3101 Z$ appear internally as
\[
\underbrace{013X31}_{\overset{\upmapsto}{x}}
\underbrace{01Z1}_{\overset{\upmapsto}{y}}
\underbrace{013X31}_{\overset{\upmapsto}{x}}
\mbox{ and }
\underbrace{01Z1}_{\overset{\upmapsto}{y}}
\underbrace{013X31}_{\overset{\upmapsto}{x}}
\underbrace{01Z1}_{\overset{\upmapsto}{y}},
\]
which contradicts Lemma~\ref{Clark}\ref{xyxdotyxy} as indicated.

\item[$Y=32$] In this case, the factors $X32Z23X$ and $Z23X32Z$ appear in $g^\omega(0).$  By Corollary~\ref{Preimage}\ref{Suffix2}, every appearance of $X32Z$ is followed by $2$ and every appearance of $Z23X$ is followed by $32.$  Therefore, the factors appear internally as
\[
\underbrace{X32}_{\overset{\upmapsto}{x}}
\underbrace{Z2}_{\overset{\upmapsto}{z}}
3
\underbrace{X32}_{\overset{\upmapsto}{x}}
\mbox{ and }
\underbrace{Z2}_{\overset{\upmapsto}{z}}
3
\underbrace{X32}_{\overset{\upmapsto}{x}}
\underbrace{Z2}_{\overset{\upmapsto}{z}},
\]
which contradicts Lemma~\ref{xz3xdotz3xz} as indicated.

\item[$Y=03$] In this case, the factors $X03Z30X$ and $Z30X03Z$ appear in $g^\omega(0)$.  By Lemma~\ref{Clark}\ref{Prefix}, every appearance of $X03Z$ is preceded by $0$, and every appearance of $Z30X$ is preceded by $03$.  Thus the above factors appear internally as
\[
\underbrace{0X}_{\overset{\upmapsto}{x}}
\underbrace{03Z}_{\overset{\upmapsto}{z}}
3
\underbrace{0X}_{\overset{\upmapsto}{x}}
\mbox{ and }
\underbrace{03Z}_{\overset{\upmapsto}{z}}
3
\underbrace{0X}_{\overset{\upmapsto}{x}}
\underbrace{03Z}_{\overset{\upmapsto}{z}},
\] 
which contradicts Lemma~\ref{xz3xdotz3xz} as indicated.

\item[$Y=31$] In this case, the factors $X31Z13X$ and $Z13X31Z$ appear in $g^\omega(0)$.  By Lemma~\ref{Clark}\ref{Suffix}, every appearance of $Z13X$ is followed by $31$, while every appearance of $X31Z$ is followed by $1$.  Thus the above factors appear internally as
\[
\underbrace{X31}_{\overset{\upmapsto}{x}}
\underbrace{Z1}_{\overset{\upmapsto}{z}}
3
\underbrace{X31}_{\overset{\upmapsto}{x}}
\mbox{ and }
\underbrace{Z1}_{\overset{\upmapsto}{z}}
3
\underbrace{X31}_{\overset{\upmapsto}{x}}
\underbrace{Z1}_{\overset{\upmapsto}{z}},
\] 
which contradicts Lemma~\ref{xz3xdotz3xz} as indicated.

\item[$Y=031$] In this case, the factors $X031Z130X$ and $Z130X031Z$ appear in $g^\omega(0).$  By Lemma~\ref{Clark}\ref{Prefix}, every appearance of $X031Z$ is preceded by $0$, while by Lemma~\ref{Clark}\ref{Suffix}, every appearance of $Z130X$ is followed by $1$.  Finally, by Lemma~\ref{Clark}\ref{Ends}, the factors above appear internally as
\[
\vert 0X \vert 031 \vert Z1 \vert 3 \vert 0X \vert 
\mbox{ and }
\vert Z1 \vert 3 \vert 0X \vert 031 \vert Z1 \vert.
\]
Now let $X'$ denote the preimage of $0X,$ and $Z'$ denote the preimage of $Z1.$ The preimage of the factors above can then be written
\[
X'2Z'3X' \mbox{ and } Z'3X'2Z'.
\]
These preimages must appear in $g^\omega(0),$ which contradicts Lemma~\ref{x2z3xdotz3x2z}.  \qedhere
\end{case}
\end{proof}

\section{The formulas $xyzx\cdot yzxy\cdot \rev{z}$ and $xyzx\cdot y\rev{z}xy$ are $4$-avoidable}\label{xyzxSection}

For the remainder of the article, let $h=01/12/20/3$.  While $xyzx\cdot yzxy\cdot \rev{z}$ and $xyzx\cdot y\rev{z}xy$ are not avoided by $g^\omega(0)$, they are avoided by $h(g^\omega(0))$, as proven in this section.  This is interesting in light of the fact that for classical formulas, if a formula $\phi$ occurs in some word $w$, then it also occurs in $f(w)$ for any nonerasing morphism $f$.  Evidently, this is not the case for formulas with reversal.  We start with some straightforward lemmas.

\begin{lemma}\label{UniqueParsing}
Every factor of $h(g^\omega(0))$ of length at least $3$ parses uniquely into code words of $h$, i.e.~has unique preimage in $h$.
\end{lemma}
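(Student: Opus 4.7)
The plan is to exploit that $h$ is a prefix code and then kill off the few remaining short ambiguous factors using the structure of $g^\omega(0)$.

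First, I would note that the four code words $\{01, 12, 20, 3\}$ begin with the four distinct letters $0, 1, 2, 3$; consequently, once any codeword boundary inside a factor is known, the remainder of the parse is forced from left to right. Any two distinct parses of a factor $u$ of $h(g^\omega(0))$ can therefore only differ by a ``shift by one'': at some position $i$ one parse has a cut while the other has a length-$2$ codeword $u_{i-1}u_i$ straddling that position. A short case analysis (using that the letter $3$ never appears inside a length-$2$ codeword) shows that the parse with the cut at $i$ must also have $u_{i-2}u_{i-1}$ and $u_i u_{i+1}$ as length-$2$ codewords, so the three letters $u_{i-1}, u_i, u_{i+1}$ form a length-$3$ subfactor $T$ of $h(g^\omega(0))$ which itself admits two different parses in two occurrences.

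Second, I would identify the candidates for $T$. Because the length-$2$ codewords are $\{01, 12, 20\}$, the only triples in which both $u_{i-1}u_i$ and $u_i u_{i+1}$ are codewords are
\[
012,\qquad 120,\qquad 201.
\]
Since $|u| \geq 3$, one such triple always fits inside $u$ (the boundary cases $i \in \{0, |u|\}$ are handled symmetrically by using the length-$3$ prefix or suffix of $u$). Moreover $T$ must appear in $h(g^\omega(0))$ under \emph{both} of its abstractly possible parses: the ``full length-$2$ codeword, then first letter of the next codeword'' parse and the ``last letter of a codeword, then full length-$2$ codeword'' parse.

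Third, I would rule this out with a direct inspection of $g = 01/2/031/3$. The letter $0$ occurs in $g^\omega(0)$ only as the initial letter of $g(0) = 01$ or of $g(2) = 031$, so it is always followed by $1$ or $3$; hence $02$ is not a factor of $g^\omega(0)$. Similarly, the letter $2$ occurs only as the block $g(1) = 2$, so it is always followed by the first letter of some $g(b)$, which lies in $\{0, 2, 3\}$; hence $21$ is not a factor of $g^\omega(0)$. Each of the two parses of each candidate triple now translates into a required length-$2$ factor of $g^\omega(0)$: the parses $012 = 01 \mid 2$ and $120 = 1 \mid 20$ both need $02$ (forbidden); the parses $012 = 0 \mid 12$ and $201 = 20 \mid 1$ both need $21$ (forbidden); and the parses $120 = 12 \mid 0$ and $201 = 2 \mid 01$ each need $10$, which does occur in $g^\omega(0)$. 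Therefore $012$ never appears in $h(g^\omega(0))$, while $120$ and $201$ each occur with exactly one parse, contradicting the assumed two parses of $u$. Uniqueness of the preimage in $h$ then follows since $h$ is injective on code words.

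The main obstacle, I expect, is simply handling the boundary cases cleanly: when the disagreeing cut sits at $i=0$ or $i=|u|$ one has to verify that the length-$3$ witness $T$ still lies inside $u$ (which it does because $|u| \geq 3$) and that both parses of $T$ genuinely occur as factors of $h(g^\omega(0))$ rather than being purely abstract. Once that bookkeeping is in place, the three-way case distinction on $T$ closes the proof.
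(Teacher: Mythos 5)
Your proposal is correct and follows essentially the same route as the paper: both rest on the observation that the code words of $h$ begin and end with pairwise distinct letters (so one cut determines the whole parse), both reduce any ambiguity to the triples $012$, $120$, $201$, and both kill the bad parses by noting that $02$ and $21$ are not factors of $g^\omega(0)$. The only difference is organizational — the paper exhibits a forced cut directly, while you assume two parses and derive a contradiction — which changes nothing of substance.
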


\begin{proof}
Let $u$ be a factor of $h(g^\omega(0))$ of length at least $3$.  We need only show the existence of a single cut in $u$, as each code word of $h$ begins with a different letter from the others, and ends with a different letter from the others.  Thus a single cut determines a parsing for the entire word.

If the letter $3$ appears in $u$ then we have cuts on either side of $3$.  If $aa$ appears for some $a\in\{0,1,2\}$ then we have $a\vert a$.  The factors $10,$ $21$, $02$, and $012$ do not appear in $h(g^\omega(0))$, so the only remaining possibilities for $u$ are $120$ and $201$.  However, $1\vert 20$ is impossible because preimage $02$ does not appear in $g^\omega(0)$, so we have $12\vert 0.$  Similarly, $20\vert 1$ is impossible because preimage $21$ is not a factor of $g^\omega(0)$, so we have $2\vert 01.$
\end{proof}

\begin{lemma}\label{Extend}
Let $X$ be a factor of $h(g^\omega(0))$ of length at least $3$.
\begin{enumerate}
\item \label{Suffix3} If $Xb\vert$ is a factor of $h(g^\omega(0))$ for some letter $b\in\{0,1,2\},$ then every appearance of $X$ in $h(g^\omega(0))$ is followed by $b$.
\item \label{Prefix3} If $\vert bX$ is a factor of $h(g^\omega(0))$ for some letter $b\in\{0,1,2\},$ then every appearance of $X$ in $h(g^\omega(0))$ is preceded by $b$.
\end{enumerate}
\end{lemma}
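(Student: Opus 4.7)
The plan is to extract the claim from Lemma \ref{UniqueParsing} together with the explicit shape of the code words of $h$: $h(0)=01$, $h(1)=12$, $h(2)=20$, $h(3)=3$. The key structural fact is that the three $2$-letter code words are pairwise distinct in both their first letter and their last letter, so a letter in $\{0,1,2\}$, together with its position within a $2$-letter code word, pins down the entire code word.

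For part (a), suppose $Xb\vert$ is a factor of $h(g^\omega(0))$ with $b\in\{0,1,2\}$. The cut after $b$ forces $b$ to be the final letter of its code word, and since $h(3)=3$ is the only single-letter code word and $b\neq 3$, this code word has length $2$. In particular, there is no cut between the last letter of $X$ and $b$ in this occurrence. Lemma \ref{UniqueParsing}, applied to $X$ itself (which has length at least $3$), then forces the positions of cuts within $X$ — including whether $X$ ends at a cut — to be the same in every occurrence. Hence in every occurrence of $X$, the last letter $c$ of $X$ lies in the middle of a $2$-letter code word, and since $c\in\{0,1,2\}$ is the first letter of that code word, the code word is uniquely $h(c)$, whose second letter is determined by $c$ via the cycle $0\mapsto 1\mapsto 2\mapsto 0$. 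That second letter matches $b$ in the original occurrence, so the letter following $X$ in any occurrence is forced to equal $b$.

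Part (b) is entirely symmetric: from $\vert bX$ with $b\in\{0,1,2\}$, the letter $b$ begins a code word which must be $h(b)$ of length $2$, so by Lemma \ref{UniqueParsing} the first letter of $X$ always sits at position $2$ of a uniquely determined $2$-letter code word (the unique $h(a)$ ending in that letter), and the preceding letter is therefore forced to be $b$. There is no real obstacle to the argument; the only point requiring care is to rule out the degenerate possibility that $b$ by itself is a complete code word, which is immediate from $b\in\{0,1,2\}$ and $h(3)=3$ being the unique single-letter code word. Once that case is dispensed with, the cyclic structure of the $2$-letter code words $01,12,20$ makes the continuation (respectively the predecessor) of $X$ determined by a single end-letter of $X$.
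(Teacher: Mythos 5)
Your argument is correct and is essentially the paper's own proof: both use the fact that a cut after $b\in\{0,1,2\}$ forces $b$ to complete the two-letter code word $ab$ with $a\equiv b-1\pmod 3$, so that $X$ ends in a dangling first letter $a$ of $h(a)$, and then invoke Lemma~\ref{UniqueParsing} to transfer this parsing to every occurrence of $X$. The extra care you take about the degenerate single-letter code word $3$ and about the cut positions being determined in every occurrence is exactly the content the paper compresses into one line.
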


\begin{proof}
For \ref{Suffix3}, let $a=b-1,$ where the subtraction is modulo $3$.  We must have $Xb\vert=X'\vert ab\vert,$ and hence by Lemma \ref{UniqueParsing}, every appearance of $X$ parses as $X'\vert a$, meaning that every appearance of $X$ is followed by $b$.  The proof of \ref{Prefix3} is similar.
\end{proof}

\begin{lemma}\label{hAvoid3}
The word $h(g^\omega(0))$ avoids $xy3x\cdot y3xy$.
\end{lemma}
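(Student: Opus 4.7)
The plan is to assume, for contradiction, that $xy3x\cdot y3xy$ occurs in $h(g^\omega(0))$ through a morphism $f$ with $X = f(x)$ and $Y = f(y)$, and to reduce this to an occurrence of the same formula in $g^\omega(0)$ itself, which is forbidden by Lemma \ref{xz3xdotz3xz}. For $|X|+|Y|$ below a computable bound the two factors $XY3X$ and $Y3XY$ lie inside a finite prefix of $h(g^\omega(0))$ (using Table \ref{IterationTable} to determine how deep we must iterate $g$), so an exhaustive search disposes of these cases. Henceforth assume $|X|,|Y|\geq 3$, so that Lemma \ref{UniqueParsing} forces the parsings of $X$ and $Y$ to be intrinsic to the strings themselves.

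Since $h(3)=3$ is a length-one codeword, every $3$ in $h(g^\omega(0))$ lies between two cuts. Inspecting $XY3X$ and $Y3XY$ then gives cuts immediately before and after the explicit $3$ in each fragment, and from these we read off that $X$ always begins at a cut while $Y$ always ends at one. The parsing of $X$ (starting at a cut) must therefore finish either exactly at position $|X|$, making $X$ a concatenation of codewords of $h$, or at position $|X|-1$, in which case $X[|X|-1]\in\{0,1,2\}$ is the first letter of a length-two codeword that extends one letter past $X$.

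If $X$ is already a concatenation of codewords, then in $Y3XY$ the second copy of $Y$ must begin at a cut, so by Lemma \ref{UniqueParsing} $Y$ too is a concatenation of codewords; taking preimages $X'=h^{-1}(X)$ and $Y'=h^{-1}(Y)$ we find $X'Y'\,3\,X'$ and $Y'\,3\,X'Y'$ as factors of $g^\omega(0)$, contradicting Lemma \ref{xz3xdotz3xz}. Otherwise, the codeword bridging the $X$--$Y$ boundary is forced to be $X[|X|-1]\cdot Y[0]$, which determines $Y[0]$. Set $\tilde{X}=X[0\ldots|X|-2]$ and $\tilde{Y}=X[|X|-1]\cdot Y$; both $\tilde{X}$ and $\tilde{Y}$ are concatenations of codewords of $h$. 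The word $\tilde{X}\tilde{Y}\,3\,\tilde{X}$ is just a prefix of $XY3X$, while $\tilde{Y}\,3\,\tilde{X}\tilde{Y}=X[|X|-1]\cdot Y3XY$ is obtained by extending the occurrence of $Y3XY$ one letter to the left---the letter $X[|X|-1]$ is forced by the codeword that completes at $Y[0]$. Passing to preimages $\tilde{X}'=h^{-1}(\tilde{X})$ and $\tilde{Y}'=h^{-1}(\tilde{Y})$ yields factors $\tilde{X}'\tilde{Y}'\,3\,\tilde{X}'$ and $\tilde{Y}'\,3\,\tilde{X}'\tilde{Y}'$ of $g^\omega(0)$, and the same contradiction.

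The main obstacle is this shifted case, since neither $X$ nor $Y$ is a priori aligned with the codeword structure of $h$. The key realization is that the cut-forcing around the constant $3$ together with Lemma \ref{UniqueParsing} pins down the misalignment uniquely, and it can be absorbed by shifting one letter from the end of $X$ to the front of $Y$ without disturbing the shape of the formula. A secondary bookkeeping issue is choosing the computational bound for the small-case search large enough that the subsequent analysis really only needs $|X|,|Y|\geq 3$, which is where Lemma \ref{UniqueParsing} applies.
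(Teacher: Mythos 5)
Your overall strategy is the same as the paper's: translate a putative occurrence of $xy3x\cdot y3xy$ in $h(g^\omega(0))$ back through $h$ to an occurrence of the same formula with constants in $g^\omega(0)$, contradicting Lemma~\ref{xz3xdotz3xz}, with the two cases (a cut at the $X$--$Y$ boundary versus a two-letter code word straddling it) and the one-letter shift in the straddling case all matching. However, your reduction to the ``large'' case is invalid: disposing of all occurrences with $|X|+|Y|$ below some bound by exhaustive search does not let you assume $|X|\geq 3$ \emph{and} $|Y|\geq 3$ afterwards, because one of the two images can remain a single letter while the other grows without bound, and then $XY3X$ and $Y3XY$ need not lie in any fixed finite prefix. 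This matters because you invoke Lemma~\ref{UniqueParsing} for $X$ and for $Y$ \emph{separately} (``the parsings of $X$ and $Y$ are intrinsic to the strings themselves,'' ``$X$ always begins at a cut while $Y$ always ends at one,'' ``by Lemma~\ref{UniqueParsing} $Y$ too is a concatenation of codewords''), and that lemma requires length at least $3$. The paper sidesteps this by applying Lemma~\ref{UniqueParsing} only to the concatenation $XY$, whose length is at least $3$ once either image has length at least $2$ --- and that much \emph{is} guaranteed by a finite search --- and by using Lemma~\ref{Extend} to pin down the boundary letters.

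The gap is repairable: forward parsing from a cut and backward parsing into a cut are each deterministic for $h$, since the code words begin, respectively end, with pairwise distinct letters, so most of your deductions survive with no lower bound on $|X|$ or $|Y|$ individually; but as written the argument does not go through, and your own closing remark about ``choosing the computational bound large enough'' cannot fix it, since no bound on $|X|+|Y|$ yields $|X|,|Y|\geq 3$. A second, related loose end: when $|X|=1$ your $\tilde X$ is empty, so the morphism you exhibit into $g^\omega(0)$ is erasing on $x$ and does not witness an occurrence of $xy3x\cdot y3xy$. The paper handles the symmetric degenerate case ($Y'=\varepsilon$) by reassigning which preimage word plays the role of $x$ and which plays $y$; you would need to do the same.
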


\begin{proof}
Suppose towards a contradiction that $xy3x\cdot y3xy$ occurs in $h(g^\omega(0))$ through nonerasing morphism $f$.  Let $X=f(x)$ and $Y=f(y)$.  By exhaustive search of $h(g^8(0)),$ which contains all factors of $h(g^\omega(0))$ of length at most $4,$ we may assume that either $X$ or $Y$ has length at least $2$.  Consider the factors
\[
XY3X \mbox{ and } Y3XY,
\]
which appear in $h(g^\omega(0))$ by supposition.  By Lemma~\ref{UniqueParsing}, the word $XY$ has a unique parsing, so in particular we have
\[
\vert XY\vert 3\vert X \mbox{ and } Y\vert 3\vert XY\vert.
\]
Now we either have $X\vert Y,$ or $X=X'a$, $Y=bY',$ and $XY=X\vert ab\vert Y$ for some $a,b\in\{0,1,2\}$ satisfying $a+1\equiv b\pmod{3}$.  In each case, we will show that the $h$-preimages of $XY3X$ and $Y3XY$ give an occurrence of $xy3x\cdot y3xy$ in $g^\omega(0)$, contradicting Lemma~\ref{xz3xdotz3xz}.
\begin{case}
\item[$XY=X\vert Y$] 
In this case,
\[
XY3X=\vert X\vert Y\vert 3\vert X\vert \mbox{ and } Y3XY=\vert Y\vert 3\vert X\vert Y\vert.
\]
The preimages of these factors in $h$ very clearly give an occurrence of $xy3x\cdot y3xy$ in $g^\omega(0)$, a contradiction.
\item[$X=X'a$, $Y=bY'$, and $XY=X\vert ab\vert Y$]
In this case,
\[
XY3X=X'\vert ab\vert Y'\vert 3\vert X'a
\mbox{ and }
Y3XY=bY'\vert 3\vert X'\vert ab\vert Y'. 
\]
Further, by Lemma~\ref{Extend}\ref{Suffix3} and \ref{Prefix3}, every appearance of $bY'3X'a$ is followed by $b$ and preceded by $a.$  So the above factors appear internally as
\[
\vert X'\vert ab\vert Y'\vert 3\vert X'\vert ab\vert
\mbox{ and }
\vert ab\vert Y'\vert 3\vert X'\vert ab\vert Y'\vert,
\]
where Lemma~\ref{UniqueParsing} was applied to determine some cuts.  Taking preimages in $h$, we obtain
\[
X_1aY_13X_1a \mbox{ and } aY_13X_1aY_1.
\]
But then $xy3x\cdot y3xy$ occurs in $g^\omega(0)$ through $x\mapsto X_1a,$ $y\mapsto Y_1,$ unless $Y_1=\varepsilon.$  If $Y_1=\varepsilon,$ then $Y'=\varepsilon$ and $Y$ has length $1$, so $X$ has length at least $2$.  But then $xy3x\cdot y3xy$ occurs in $g^\omega(0)$ through $x\mapsto X_1,$ $y\mapsto aY_1$.  \qedhere
\end{case}  
\end{proof}

\begin{lemma}\label{hAvoid}
The word $h(g^\omega(0))$ avoids $xyx\cdot yxy$.
\end{lemma}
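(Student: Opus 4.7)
The plan mirrors the proof of Lemma~\ref{hAvoid3}, replacing the role of the constant $3$ (which automatically forces cuts around it) by a systematic case analysis at the $X$-$Y$ and $Y$-$X$ boundaries of $XYX$ and $YXY$. Suppose for contradiction that $xyx\cdot yxy$ occurs in $h(g^\omega(0))$ through a nonerasing morphism $f$ with $X=f(x)$ and $Y=f(y)$. A small exhaustive search in $h(g^n(0))$ for a suitable $n$ (chosen via Table~\ref{IterationTable} so that every short factor of $h(g^\omega(0))$ appears) disposes of the case $|X|=|Y|=1$, and we may therefore assume $|XY|\geq 3$, so that Lemma~\ref{UniqueParsing} yields unique parsings of $XY$, $XYX$, and $YXY$.

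In the unique parsing of $XY$, the $X$-$Y$ boundary is either a cut ($XY=X\mid Y$) or lies inside a single code word, in which case $X=X'a$, $Y=bY'$ with $ab\in\{01,12,20\}$ and $XY=X'\mid ab\mid Y'$; the same dichotomy holds for the $Y$-$X$ boundary in $YX$. In the fully clean subcase, both $X$ and $Y$ are concatenations of complete code words, so $X=h(X_1)$ and $Y=h(Y_1)$ for some nonempty $X_1,Y_1$; the factors $XYX$ and $YXY$ then have $h$-preimages $X_1Y_1X_1$ and $Y_1X_1Y_1$ in $g^\omega(0)$, producing an occurrence of $xyx\cdot yxy$ there and contradicting Lemma~\ref{Clark}\ref{xyxdotyxy}.

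In each remaining subcase I would apply Lemma~\ref{Extend} to prepend and append the uniquely forced neighbouring letters to $XYX$ and $YXY$; the extended factors then parse cleanly into code words and have well-defined $h$-preimages in $g^\omega(0)$, from which I would read off two nonempty words $P$ and $Q$ such that both $PQP$ and $QPQ$ are factors of $g^\omega(0)$, once again contradicting Lemma~\ref{Clark}\ref{xyxdotyxy}. For instance, in the ``all mid'' subcase, where every boundary lies inside a code word, the preimage of the extension $c_X\cdot XYX\cdot f_X$ takes the form $\alpha X^*\omega Y^*\alpha X^*\omega$, where $\alpha,\omega$ are the preimage letters in $g^\omega(0)$ of the two spanning code words and $X^*,Y^*$ are the $h$-preimages of the middle portions of $X,Y$; setting $P=\alpha X^*\omega$ and $Q=Y^*$ yields $PQP$ immediately, and $QPQ$ is then extracted as a factor of the preimage of $c_Y\cdot YXY\cdot f_Y$. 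In the degenerate case $Y^*=\varepsilon$ the factor $PQP$ becomes the square $PP$, already forbidden by Lemma~\ref{Clark}\ref{xx}.

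The main obstacle is the bookkeeping in the mixed subcases (e.g.\ $X$ ending mid-code-word while $Y$ ends at a cut), where partial code words are consumed asymmetrically on the two sides of $XYX$ and $YXY$, and one must combine the correct prepended/appended letters with the preimage structure to obtain nonempty $P,Q$ for which $PQP$ and $QPQ$ are genuine factors of $g^\omega(0)$. A secondary subtlety is that the length-$2$ factor $X=20$ admits two different parsings ($\mid 20\mid$ and $2\mid 0$), so when $|X|=2$ the two occurrences of $X$ in $XYX$ could in principle have different local parses; this case is handled by enlarging the initial exhaustive search slightly, or by directly checking that the resulting constraints on $Y$ cannot be simultaneously realised in $h(g^\omega(0))$.
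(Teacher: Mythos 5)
Your proposal is correct and takes essentially the same route as the paper: the paper's own proof consists of the same initial exhaustive search (of $h(g^8(0))$, which contains all factors of length at most $3$) followed by the remark that ``an analysis similar to the one done in Lemma~\ref{hAvoid3}'' — i.e.\ the unique-parsing/cut dichotomy via Lemma~\ref{UniqueParsing}, extension by forced letters via Lemma~\ref{Extend}, and passage to $h$-preimages — yields an occurrence of $xyx\cdot yxy$ in $g^\omega(0)$, contradicting Lemma~\ref{Clark}\ref{xyxdotyxy}. Your write-up in fact supplies more of the bookkeeping (the degenerate $Y^*=\varepsilon$ case and the ambiguity of length-$2$ parses) than the paper does.
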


\begin{proof}
Suppose towards a contradiction that $xyx\cdot yxy$ occurs in $h(g^\omega(0))$ through nonerasing morphism $f$.  Let $X=f(x)$ and $Y=f(y)$.  By exhaustive search of $h(g^8(0)),$ which contains all factors of $h(g^\omega(0))$ of length at most $3,$ we may assume that either $X$ or $Y$ has length at least $2$.  By an analysis similar to the one done in Lemma~\ref{hAvoid3}, the factors $XYX$ and $YXY$ have $h$-preimages that make up an occurrence of $xyx\cdot yxy$ in $g^\omega(0)$, contradicting Lemma~\ref{Clark}\ref{xyxdotyxy}.
\end{proof}

We are now ready to prove that $xyzx\cdot yzxy\cdot \rev{z}$ and $xyzx\cdot y\rev{z}xy$ are avoided by $h(g^\omega(0))$.  The techniques used in the proofs are similar to those used in the previous section to demonstrate the $4$-avoidability of the $3$-minimal formulas that flatten to $xyzyx\cdot zyxyz$.  Again, we note that once we know that there are only finitely many reversible factors in $h(g^\omega(0))$, the proof could be completed by carrying out Cassaigne's algorithm on a finite list of formulas with letters, but we have opted instead to write the proofs by hand.

\begin{theorem}\label{hmain1}
The formula $xyzx\cdot yzxy\cdot \rev{z}$ is avoided by $h\left(g^\omega(0)\right)$.
\end{theorem}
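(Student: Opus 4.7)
The plan is to assume, towards contradiction, that $\phi = xyzx \cdot yzxy \cdot \rev{z}$ occurs in $h(g^\omega(0))$ through a morphism respecting reversal $f$. Let $X = f(x)$, $Y = f(y)$, $Z = f(z)$, so that $f(\rev{z}) = \rev{Z}$ and the words $XYZX$, $YZXY$, and $\rev{Z}$ are all factors of $h(g^\omega(0))$. First I would handle small cases by an exhaustive computer search on a sufficiently long prefix of $h(g^\omega(0))$ (using an analog of Table~\ref{IterationTable} for the composed morphism $h \circ g$), so that we may assume each of $|X|$, $|Y|$, $|Z|$ is large enough to apply Lemma~\ref{UniqueParsing} and Lemma~\ref{Extend} to all the relevant factors.

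Since $\rev{Z}$ is a factor of $h(g^\omega(0))$, the word $Z$ is a reversible factor. Mirroring Lemma~\ref{ReversibleFactors} for $g^\omega(0)$, I would show that $h(g^\omega(0))$ has only finitely many nonpalindromic reversible factors, and that long palindromic factors are likewise severely constrained by the unique parsing of Lemma~\ref{UniqueParsing} (the $h$-blocks $01, 12, 20, 3$ admit essentially no palindromic concatenations, so palindromes of length at least 3 are rare). Combined with the small-case elimination, this reduces the problem to a short explicit list of candidates for $Z$.

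For each candidate I would follow the template of Theorem~\ref{gomega3}. The cleanest case is $Z = 3$: the factors $XY3X$ and $Y3XY$ occur in $h(g^\omega(0))$, which directly contradicts Lemma~\ref{hAvoid3}. For a general candidate $Z$, my strategy is to use Lemma~\ref{Extend} and unique parsing to force the factors $XYZX$ and $YZXY$ into specific block-aligned internal configurations, then absorb appropriate prefixes and suffixes of $Z$ into the roles of $x$ and $y$ to extract a pair of fragments $A, B$ so that either $AB3A$ and $B3AB$ both occur in $h(g^\omega(0))$ (contradicting Lemma~\ref{hAvoid3}), or $ABA$ and $BAB$ both occur (contradicting Lemma~\ref{hAvoid}). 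The $\rev{Z}$ fragment supplies the additional constraint that eliminates any residual palindrome-based configurations.

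The main obstacle will be the systematic case analysis on $Z$, particularly the subcases where $Z$ contains no occurrence of the letter $3$, since then Lemma~\ref{hAvoid3} is not directly available and we must instead coax out a contradiction with Lemma~\ref{hAvoid}. In those subcases I expect to rely on Lemma~\ref{Extend} to pin down the letters immediately preceding and following each occurrence of $Z$, reducing the question to a few completely determined short words on which the desired $ABA \cdot BAB$ pattern (or a direct parsing contradiction) can be read off. I anticipate tedious bookkeeping with cuts but no conceptually new ingredients beyond those used in Lemmas~\ref{hAvoid3}, \ref{hAvoid}, and Theorem~\ref{gomega3}.
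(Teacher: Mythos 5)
Your proposal follows essentially the same route as the paper: the fragment $\rev{z}$ forces $Z$ to be a reversible factor of $h(g^\omega(0))$, an exhaustive search shows these are exactly the factors of $00$, $11$, $22$, $030$, $131$, $232$, and a case analysis on this short list uses Lemma~\ref{UniqueParsing} and Lemma~\ref{Extend} to block-align $XYZX$ and $YZXY$ and extract occurrences contradicting Lemma~\ref{hAvoid3} or Lemma~\ref{hAvoid}, exactly as you outline. One small imprecision: the exhaustive search on short images only lets you assume that \emph{some} image is long (in fact the paper just needs $|XY|\geq 3$), not that each of $|X|,|Y|,|Z|$ is large --- indeed $|Z|\leq 3$ always --- but this does not affect your plan.
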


\begin{proof}
Suppose towards a contradiction that $xyzx\cdot yzxy\cdot \rev{z}$ occurs in $h(g^\omega(0))$ through morphism respecting reversal $f$.  Let $X=f(x),$ $Y=f(y),$ and $Z=f(y)$.  The only candidates for $Z$ are the reversible factors of $h(g^\omega(0))$.  By an exhaustive search of $h(g^8(0))$, in which all factors of $h(g^\omega(0))$ of length at most $4$ appear, these are exactly the factors of
\[
00,\ 11,\ 22,\ 030,\ 131,\ \mbox{and } 232. 
\]
We proceed with case work, showing that the appearance of factors $XYZX$ and $YZXY$ in $h(g^\omega(0))$ leads to a contradiction in each case.  We may assume that either $X$ or $Y$ has length at least $2$ (so that $XY$ has length at least $3$), as otherwise the $f$-image of any fragment of $xyzx\cdot yzxy\cdot\rev{z}$ has length at most $6$, and we have verified that this is impossible by exhaustive search.

\begin{case}
\item[$Z=3$] The appearance of factors $XY3X$ and $Y3XY$ contradicts Lemma~\ref{hAvoid3} immediately.

\item[$Z=b$, $b\in\{0,1,2\}$]
Consider the factors $XYbX$ and $YbXY.$  By Lemma~\ref{UniqueParsing}, $YbX$ parses uniquely into code words, so we either have $XY|bX$ and $Y|bXY$, or $XYb|X$ and $Yb|XY.$  

\begin{description}
\item[\textbf{Subcase a:} $XY|bX$ and $Y|bXY$] Since $|bXY$ appears in the second factor, by Lemma~\ref{Extend} \ref{Prefix3}, $XY$ is always preceded by $b$.  Therefore, the factors $XYbX$ and $YbXY$ appear internally as
\[
\underbrace{bX}_{\overset{\upmapsto}{x}}
\underbrace{Y}_{\overset{\upmapsto}{y}}
\underbrace{bX}_{\overset{\upmapsto}{x}}
\mbox{ and } 
\underbrace{Y}_{\overset{\upmapsto}{y}}
\underbrace{bX}_{\overset{\upmapsto}{x}}
\underbrace{Y}_{\overset{\upmapsto}{y}},
\] 
contradicting Lemma~\ref{hAvoid} as indicated.

\item[\textbf{Subcase b:} $XYb|X$ and $Yb|XY$] Since $XYb|$ appears in the first factor, by Lemma~\ref{Extend} \ref{Suffix3}, $XY$ is always followed by $b$.  Therefore, the factors $XYbX$ and $YbXY$ appear internally as
\[
\underbrace{X}_{\overset{\upmapsto}{x}}
\underbrace{Yb}_{\overset{\upmapsto}{y}}
\underbrace{X}_{\overset{\upmapsto}{x}}
\mbox{ and } 
\underbrace{Yb}_{\overset{\upmapsto}{y}}
\underbrace{X}_{\overset{\upmapsto}{x}}
\underbrace{Yb}_{\overset{\upmapsto}{y}},
\]
contradicting Lemma~\ref{hAvoid} as indicated.

\end{description}

\item[$Z=bb$, $b\in\{0,1,2\}$]
Consider the factors $XYb|bX$ and $Yb|bXY.$ Since $XYb|$ appears in the first factor and $|bXY$ appears in the second, by Lemma~\ref{Extend} parts~\ref{Suffix3} and~\ref{Prefix3}, every appearance of $XY$ is both followed and preceded by $b$.  But then $XYbbX$ and $YbbXY$ appear internally as
\[
\underbrace{bX}_{\overset{\upmapsto}{x}}
\underbrace{Yb}_{\overset{\upmapsto}{y}}
\underbrace{bX}_{\overset{\upmapsto}{x}}
\mbox{ and }
\underbrace{Yb}_{\overset{\upmapsto}{y}}
\underbrace{bX}_{\overset{\upmapsto}{x}}
\underbrace{Yb}_{\overset{\upmapsto}{y}},
\]
contradicting Lemma~\ref{hAvoid} as indicated.

\item[$Z=b3b$, $b\in\{0,1,2\}$]
Consider the factors $XYb|3|bX$ and $Yb|3|bXY$. Since $XYb|$ appears in the first factor and $|bXY$ appears in the second, by Lemma~\ref{Extend} parts~\ref{Suffix3} and~\ref{Prefix3}, every appearance of $XY$ is both followed and preceded by $b$.  But then $XYb3bX$ and $Yb3bXY$ appear internally as
\[
\underbrace{bX}_{\overset{\upmapsto}{x}}
\underbrace{Yb}_{\overset{\upmapsto}{y}}
3
\underbrace{bX}_{\overset{\upmapsto}{x}}
\mbox{ and }
\underbrace{Yb}_{\overset{\upmapsto}{y}}
3
\underbrace{bX}_{\overset{\upmapsto}{x}}
\underbrace{Yb}_{\overset{\upmapsto}{y}},
\]
contradicting Lemma~\ref{hAvoid3} as indicated.

\item[$Z=b3$, $b\in\{0,1,2\}$]
Consider the factors $XYb|3|X$ and $Yb|3|XY$.  Since $XYb|$ appears in the first factor, by Lemma~\ref{Extend}\ref{Suffix3}, every appearance of $XY$ is followed by $b$.  But then the factors $XYb3X$ and $Yb3XY$ appear internally as
\[
\underbrace{X}_{\overset{\upmapsto}{x}}
\underbrace{Yb}_{\overset{\upmapsto}{y}}
3
\underbrace{X}_{\overset{\upmapsto}{x}}
\mbox{ and }
\underbrace{Yb}_{\overset{\upmapsto}{y}}
3
\underbrace{X}_{\overset{\upmapsto}{x}}
\underbrace{Yb}_{\overset{\upmapsto}{y}},
\]
contradicting Lemma~\ref{hAvoid3} as indicated.

\item[$Z=3b$, $b\in\{0,1,2\}$]
Consider the factors $XY|3|bX$ and $Y|3|bXY$.  Since $|bXY$ appears in the second factor, by Lemma~\ref{Extend}\ref{Prefix3}, every appearance of $XY$ is preceded by $b$.  But then the factors $XY3bX$ and $Y3bXY$ appear internally as
\[
\underbrace{bX}_{\overset{\upmapsto}{x}}
\underbrace{Y}_{\overset{\upmapsto}{y}}
3
\underbrace{bX}_{\overset{\upmapsto}{x}}
\mbox{ and }
\underbrace{Y}_{\overset{\upmapsto}{y}}
3
\underbrace{bX}_{\overset{\upmapsto}{x}}
\underbrace{Y}_{\overset{\upmapsto}{y}},
\]
contradicting Lemma~\ref{hAvoid3} as indicated.  \qedhere
\end{case}
\end{proof}

\begin{theorem}
The formula $xyzx\cdot y\rev{z}xy$ is avoided by $h(g^\omega(0))$.
\end{theorem}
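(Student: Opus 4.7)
The plan is to mirror the approach of Theorem~\ref{hmain1}. Suppose for contradiction that $\phi = xyzx \cdot y\rev{z}xy$ occurs in $h(g^\omega(0))$ through some morphism respecting reversal $f$, with $X=f(x)$, $Y=f(y)$, $Z=f(z)$. Since both $Z$ and $\rev{Z}$ appear as factors of $h(g^\omega(0))$, $Z$ is a reversible factor, and by the enumeration in the proof of Theorem~\ref{hmain1} the only candidates are the factors of $00$, $11$, $22$, $030$, $131$, and $232$. I would start by using an exhaustive search in $h(g^N(0))$ for a suitable $N$ to reduce to the case where $|X|\geq 3$ or $|Y|\geq 3$.

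The palindromic cases (those in which $\rev{Z}=Z$) follow at once from Theorem~\ref{hmain1}: the occurrence of $\phi$ becomes an occurrence of the classical formula $xyzx\cdot yzxy$, and since $Z$ is a factor of $XYZX$, $\rev{Z}=Z$ appears in $h(g^\omega(0))$ automatically, producing an occurrence of $xyzx\cdot yzxy\cdot \rev{z}$, which contradicts Theorem~\ref{hmain1}.

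The heart of the proof is the handling of the six non-palindromic values $Z\in\{03,30,13,31,23,32\}$ by a single unified use of Lemma~\ref{Extend}\ref{Prefix3}. Take $Z=30$ for concreteness: the parsing of $XY30X$ must have the form $|XY|3|0X|$, which forces the constant $0$ to be the first letter of the block $h(0)=01$, so $X$ begins with $1$. In particular $|0X$ is a factor of $h(g^\omega(0))$, and provided $|X|\geq 3$, Lemma~\ref{Extend}\ref{Prefix3} implies that every appearance of $X$ is preceded by $0$. But in the companion factor $Y03XY$, the middle $X$ is preceded by the constant $3$, giving a contradiction. In each of the other five non-palindromic cases an entirely analogous argument works: exactly one of the two factors of $\phi$ produces a prefix $|bX$ (with $b\in\{0,1,2\}$) that forces $X$ always to be preceded by $b$, while the other factor has $X$ preceded by $3$. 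The symmetric case $|Y|\geq 3$ is handled by the dual application of Lemma~\ref{Extend}\ref{Suffix3} to a suffix $Yb|$ arising in $Y\rev{Z}XY$ (where $b\in\{0,1,2\}$ is determined by $\rev{Z}$), again contradicting the fact that $Y$ is immediately followed by $3$ in the other factor.

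The remaining configurations have $|X|,|Y|\leq 2$, giving factors of length at most $8$, and are dispatched by the initial exhaustive search in $h(g^N(0))$ for $N$ large enough. The main obstacle is simply the bookkeeping in the case analysis, though the six non-palindromic cases share a common structure, so the actual content is short.
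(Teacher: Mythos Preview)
Your proposal is correct and follows essentially the same route as the paper: reduce the palindromic $Z$ cases to Theorem~\ref{hmain1}, then for $Z\in\{b3,3b\}$ use the forced cuts around $3$ together with unique parsing (you invoke Lemma~\ref{Extend}, the paper invokes Lemma~\ref{UniqueParsing} directly, which amounts to the same thing) to contradict the letter preceding $X$ or following $Y$ in the other fragment, with an exhaustive search handling $|X|,|Y|\leq 2$. The only cosmetic slip is that which fragment yields $|bX$ (resp.\ $Yb|$) depends on whether $Z=b3$ or $Z=3b$, but you already note this for the $X$ case and the $Y$ case is symmetric.
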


\begin{proof}
Suppose towards a contradiction that $xyzx\cdot y\rev{z}xy$ occurs in $h(g^\omega(0))$ through morphism respecting reversal $f$.  Let $X=f(x),$ $Y=f(y),$ and $Z=f(z).$  If $Z$ is palindromic then $Z=\rev{Z}$, and the arguments used in Theorem~\ref{hmain1} apply.  So $Z$ has the form $b3$ or $3b$ for some $b\in\{0,1,2\}$.  We may assume that either $X$ or $Y$ has length at least $3$, as otherwise the $f$-image of any fragment of $xyzx\cdot y\rev{z}xy$ has length at most $8$, and we have verified that this is impossible by exhaustive search.

Whether $Z=b3$ or $Z=3b,$ the factors $XYZX$ and $Y\rev{Z}XY$ contain the factors $Yb3$ and $Y3$, and $3bX$ and $3X$.  The factor $Yb3$ indicates that $Y=Y'\vert a$ in this factor (where $a+1\equiv b\pmod{3}$) and if $|Y|\geq 3$, then every appearance of $Y$ must parse this way by Lemma~\ref{UniqueParsing}.  But the factor $Y3$ makes this impossible.  So we may assume that $|Y|<3$.  However, then $|X|\geq 3$ and a similar argument applies since the factors $3bX$ and $3X$ both appear.
\end{proof}

\section{Conclusion}

In this article, we found $n$-avoidance bases $\Phi_1$, $\Phi_2,$ and $\Phi_3$ for formulas with reversal for $n=1$, $n=2$, and $n=3,$ respectively.  We found bounds on the avoidability index of every element in these bases, from which we are able to conclude that $xx$ is the unique element in $\Phi_1$ of highest avoidability index $3$, $xyx\cdot \rev{y}$ is the unique element in $\Phi_2$ of highest avoidability index $4$, and $xyzx\cdot\rev{y}\cdot\rev{z}$ is the unique element in $\Phi_3$ of highest avoidability index $5$.  Given that the formula with reversal
\[
\psi_n=xy_1y_2\dots y_nx\cdot \rev{y_1}\cdot \rev{y_2}\cdot\ldots\cdot\rev{y_n}
\]
is known to have avoidability index at least $4$ for all $n\geq 1$ \cite{CMR2016}, it would be interesting to know whether $\psi_n$ has highest avoidability index among all formulas with reversal on $n+1$ letters for each $n\geq 1$.  Since there are known constant bounds on the avoidability index of $\psi_n$\cite{CMR2016}, we suspect that this is not the case.  However, it is remarkable that these simple formulas with reversal $xx$, $xyx\cdot \rev{y}$, and $xyzx\cdot \rev{y}\cdot \rev{z}$ have higher avoidability index than any other minimal formula with reversal on the same number of variables.

While the exact avoidability indices of all elements in $\Phi_1$ and $\Phi_2$ are known, there are several formulas with reversal in $\Phi_3$ whose exact avoidability indices are unknown.  We suspect that the avoidability index of each of the formulas with reversal studied in Section~\ref{xyzyxSection} is $2$, and that the avoidability index of each of the formulas with reversal studied in Section~\ref{xyzxSection} is $3$.

Finally, we note that there are two main obstacles to finding a $4$-avoidance basis for formulas with reversal.  First of all, we do not have a nice characterization of avoidable formulas with reversal on $4$ variables (those with exactly three one-way variables are the issue).  Secondly, if we were to employ a computer check as we did in this article, the computation could be incredibly time-consuming, even with significant results analogous to Lemma~\ref{ThreeTwoWay}, Lemma~\ref{TwoTwoWay}, and Lemma~\ref{OneTwoWay}, which might reduce the length of the check.  Finally, we point out that a $4$-avoidance basis for classical formulas is still not known; this might be a more tractable intermediate problem.

\section*{Acknowledgements}

We would like to thank the anonymous reviewers for their helpful comments and suggestions.  This work was supported by the Natural Sciences and Engineering Research Council of Canada (NSERC), grant numbers 418646-2012 and 42410-2010.


\providecommand{\bysame}{\leavevmode\hbox to3em{\hrulefill}\thinspace}
\providecommand{\MR}{\relax\ifhmode\unskip\space\fi MR }
\providecommand{\MRhref}[2]{%
  \href{http://www.ams.org/mathscinet-getitem?mr=#1}{#2}
}
\providecommand{\href}[2]{#2}

\end{document}